\documentclass{elsarticle}

\usepackage[all,cmtip]{xy}
\usepackage{array}
\usepackage{amsmath, amssymb, amsthm}
\usepackage{graphics}
\usepackage{algorithm}
\usepackage{algorithmic}
\usepackage{booktabs}
\usepackage[dvipsnames]{xcolor}
\usepackage{cancel}
\usepackage{caption}
\usepackage{subcaption}
\usepackage{pdflscape}
\usepackage[normalem]{ulem}

\hyphenation{para-me-tri-zed pa-ra-me-tri-za-tions}

\def\rmd{\textup{d}}

\def\Hom{\textup{Hom}}

\def\CC{\mathbb{C}}
\def\KK{\mathbb{K}}
\def\PP{\mathbb{P}}
\def\QQ{\mathbb{Q}}
\def\RR{\mathbb{R}}

\def\ZZ{\mathbb{Z}}

\def\CCC{\mathcal{C}}
\def\DDD{\mathcal{D}}
\def\FFF{\mathcal{F}}

\def\SSS{\mathcal{S}}

\def\bfb{{\boldsymbol{b}}}
\def\bfc{{\mathbf{c}}}
\def\bfd{{\mathbf{d}}}
\def\bff{{\boldsymbol{f}}}

\def\bfp{{\boldsymbol{p}}}
\def\bfq{{\boldsymbol{q}}}

\def\bfx{{\boldsymbol{x}}}

\def\bfv{{\boldsymbol{v}}}

\def\bfP{{\mathbf{P}}}

\def\diag{\mathrm{diag}}

\def\bfI{{\boldsymbol{I}}}

\def\bfA{{\boldsymbol{A}}}
\def\bfC{{\boldsymbol{C}}}

\def\bfM{{\boldsymbol{M}}}

\def\bfR{{\boldsymbol{R}}}
\def\bfS{{\boldsymbol{S}}}
\def\bfT{{\boldsymbol{T}}}

\newtheorem{theorem}{Theorem}
\newtheorem{corollary}[theorem]{Corollary}
\newtheorem{proposition}[theorem]{Proposition}

\theoremstyle{definition}

\newtheorem{example}{Example}
\newtheorem{remark}{Remark}

\graphicspath{{./figs/}}

\begin{document}

\begin{frontmatter}
\title{Affine equivalences of surfaces of translation and minimal surfaces,\\ and applications to symmetry detection and design}

\author[a]{Juan Gerardo Alc\'azar\fnref{proy}}
\ead{juange.alcazar@uah.es}
\author[b]{Georg Muntingh}
\ead{Georg.Muntingh@sintef.no\fnref{c2t}}

\address[a]{Departamento de F\'{\i}sica y Matem\'aticas, Universidad de Alcal\'a,
E-28871 Madrid, Spain}
\address[b]{SINTEF Digital, PO Box 124 Blindern, 0314 Oslo, Norway}

\fntext[proy]{Partially supported by the grant PID2020-113192GB-I00 (Mathematical Visualization: Foundations, Algorithms and Applications) from the Spanish MICINN. Juan G. Alc\'azar is also a member of the Research Group {\sc asynacs} (Ref. {\sc ccee2011/r34}).  }

\fntext[c2t]{This project has received funding from the European Union's Horizon 2020 research and innovation programme under grant agreement No 951956.}

\begin{abstract}
We introduce a characterization for affine equivalence of two surfaces of translation defined by either rational or meromorphic generators. In turn, this induces a similar characterization for minimal surfaces. In the rational case, our results provide algorithms for detecting affine equivalence of these surfaces, and therefore, in particular, the symmetries of a surface of translation or a minimal surface of the considered types. Additionally, we apply our results to designing surfaces of translation and minimal surfaces with symmetries, and to computing the symmetries of the higher-order Enneper surfaces.
\end{abstract}

\end{frontmatter}

\section{Introduction}
\emph{Surfaces of translation}, also called \emph{translational surfaces} (c.f. \cite{VL17}) are surfaces generated by sliding one space curve along another space curve. Due to their simplicity, these surfaces are used in Computer-Aided Geometric Design. In particular, any two intersecting curves are interpolated by the surface of translation generated by these curves, and the bilinear Coons patch interpolating four boundary curves can be expressed as a weighted linear combination of the translational surfaces generated by the two curves at each corner. Efficient algorithms for computing $\mu$-bases and implicitization are known \cite{WG18a}, \cite{WG18b}.

\emph{Minimal surfaces} (c.f. \cite{Ohdenal, VL17} and \cite[Chapters 16 and 22]{Gray}) are surfaces whose mean curvature is identically zero. It was already known by Sophus Lie that such surfaces are also surfaces of translation generated by complex conjugated curves. Minimal surfaces have the remarkable property of spanning a given space curve with minimal area. Because of this property, they arise frequently in nature, for instance in soap films, and are useful in architecture. In addition, minimal surfaces have applications across the sciences, for instance in general relativity, molecular biology, and material science. 

Two surfaces are \emph{affinely equivalent} if there exists a nonsingular affine map transforming one of the surfaces onto the other. Recognizing affine equivalence is of interest in computer-aided geometric design, in computer vision and in pattern recognition. Two notable instances of affine equivalence are similarity and symmetry: two surfaces are similar when they are the same surface up to a scaling, translation, rotation and reflection; a surface is symmetric when it is invariant under a nontrivial isometry. 

Recently there have been several papers introducing methods for recognizing projective equivalences, affine equivalences and symmetries for rational curves and surfaces. For rational curves the problem can be considered as essentially solved; see for instance \cite{AHM14, AHM15b, HJ18}. For rational surfaces the problem is more complicated, and the general case is still unsolved. However,
progress has been made in special cases. Involutions of polynomially parametrized surfaces are addressed in~\cite{AHM15}, while symmetries of canal surfaces and Dupin cyclides were investigated in~\cite{ADM}. In \cite{HJ18-2}, an algorithm for computing projective and affine equivalences for the case of rational parametrizations without projective base points is given. Affine equivalences for ruled surfaces are considered in \cite{AQ}. Projective equivalences of ruled surfaces are studied in \cite{BLV}, where some aspects of the case of implicit algebraic surfaces are also addressed.

In this paper we introduce a characterization for affine equivalence of two surfaces of translation, and therefore also of two minimal surfaces. We focus on rational surfaces, although our results extend also to the meromorphic case. For the rational case, our results give rise to algorithms for detecting affine equivalence. We also apply our method to detecting symmetries in the considered types of surfaces, and to designing symmetric surfaces of translation and minimal surfaces. Functionality for generating such results, as well as all the examples in this paper, are provided in Python and Sage, available as a GitHub repository \cite{Muntingh}; the repository also includes the files corresponding to two examples carried out in Maple.

\bigskip
\noindent
{\bf Acknowledgements:} We thank the anonymous referees for their comments on a previous version of the paper, which helped to improve the final version.

\section{Background}

\subsection{Affine equivalences and symmetries} \label{subsec-affine}

A nonsingular \emph{affine map $\bff$} of $\RR^n$ takes the form $\bff(\bfx) = \bfM \bfx + \bfb$, with $\bfb\in \RR^n$ a vector and $\bfM\in \RR^{n\times n}$ a nonsingular matrix. If $\bfM$ is orthogonal, i.e., $\bfM\bfM^\textup{T} = \bfI$, then $\bff$ defines an \emph{isometry}. Given two surfaces $\SSS_1,\SSS_2$, we say that $\SSS_1,\SSS_2$ are \emph{affinely equivalent} if there exists a nonsingular affine map $\bff$ such that $\bff(\SSS_1)=\SSS_2$. In this case we also say that $\bff$ is an \emph{affine equivalence} between $\SSS_1,\SSS_2$; similarly for two curves $\CCC,{\mathcal D}$. If $\SSS_1=\SSS_2$ and $\bff(\bfx) = \bfM \bfx + \bfb$ with $\bfM$ orthogonal, then we say that $\bff$ is a \emph{symmetry} of the surface; similarly for a curve $\CCC$. Although we will consider both real and complex curves, we will only consider affine equivalences and symmetries that are real. The identity map $\bff = \text{id}_{\RR^n}$ is referred to as the \emph{trivial} isometry/symmetry. A curve or surface is called \emph{symmetric} if it has a nontrivial symmetry. Notable symmetries are \emph{planar symmetries} (i.e., reflections in a plane), \emph{axial symmetries} (i.e., rotations about a line), \emph{central symmetries} (i.e., symmetries with respect to a point), and \emph{rotoreflections} (i.e., composition of a rotation about a line and a reflection in a plane perpendicular to this line). Special cases of axial symmetries are the \emph{half-turn} (rotation by angle $\pi$) and the \emph{quarter-turn} (rotation by angle~$\pm \pi/2$). 

For further information on nontrivial symmetries of Euclidean space, see \cite{Coxeter69} and \cite[\S 2]{AHM15}.

\subsection{Surfaces of translation}
In this section we introduce two closely related concepts of surfaces of translation, namely those generated by real curves and those generated by complex conjugated curves. 

\subsubsection{Averaging operator}
Let $\KK$ be a field and $\KK^n$ the corresponding $n$-dimensional affine space over $\KK$. In this paper we consider $\KK = \RR, \CC$. Following \cite{VL17}, we equip $\KK^n$ with the binary operation $\oplus$ defined by taking the average, i.e.,
\begin{equation}\label{eq:AverageOperator}
\oplus: \KK^n \times \KK^n \longrightarrow \KK^n,\qquad \bfp\oplus\bfq := \frac{\bfp + \bfq}{2}.
\end{equation}
By abuse of notation, we can also consider $\oplus$ as a binary operation on the space of rational (or meromorphic) parametrizations,
\begin{equation}\label{eq:AverageOperator2}
\begin{gathered}
\oplus: \Hom(\KK, \KK^n) \times \Hom(\KK, \KK^n) \longrightarrow \Hom(\KK^2, \KK^n),\ \\
(f\oplus g) (t,s) := f(t)\oplus g(s),
\end{gathered}
\end{equation}
where $\Hom(\KK^m, \KK^n)$ denotes the space of rational (or meromorphic) maps $\KK^m\dashrightarrow \KK^n$.

Note that composition with an affine map $\bff(\bfx) = \bfM \bfx + \bfb$ is distributive with respect to $\oplus$, i.e.,
\begin{equation}\label{distrib} 
\bff \circ (\bfc_1\oplus\bfc_2) = (\bff \circ \bfc_1) \oplus (\bff \circ \bfc_2),
\end{equation}
because, for any $t,s$, 
\[ \bff \circ (\bfc_1 \oplus \bfc_2) (t,s)
 = \frac{\bfM\bfc_1(t)+\bfb+ \bfM\bfc_2(s)+\bfb}{2} = (\bff\circ \bfc_1) (t) \oplus (\bff\circ \bfc_2) (s). \]

\subsubsection{Real surfaces of translation}
From its definition \eqref{eq:AverageOperator} on the pair of affine spaces $\KK^n$, the operator $\oplus$ can restrict to pairs of space curves $\CCC_1,\CCC_2 \subset \KK^3$. In this manner we arrive at a set $\SSS = \CCC_1\oplus \CCC_2$ of averages of all pairs of points in $\CCC_1, \CCC_2$, called the \emph{surface of translation generated by} $\CCC_1, \CCC_2$. 
We will say that $(\CCC_1,\CCC_2)$ is a \emph{generator pair} of $\SSS$. In particular, $\SSS$ contains two families of congruent curves, which are translated copies of the curves $\CCC_1,\CCC_2$, scaled by a factor $\frac{1}{2}$. 

Parametrizations of the curves $\CCC_1, \CCC_2$ induce a corresponding parametrization of the surface $\SSS$ through \eqref{eq:AverageOperator2}. In this paper we consider (not necessarily distinct) parametrizations
\begin{equation}\label{eq:CurveParametrizations}
\bfc_1(t)=\big(x_1(t),y_1(t),z_1(t)\big), \qquad \bfc_2(s)=\big(x_2(s),y_2(s),z_2(s)\big), \qquad t,s\in \KK,
\end{equation}
where $x_1,x_2,y_1,y_2,z_1,z_2$ are rational or meromorphic functions with coefficients in $\KK$, parametrizing the space curves $\CCC_1,\CCC_2 \subset \KK^3$. The surface $\SSS = \CCC_1\oplus \CCC_2$ then has the corresponding parametrization
\begin{equation}\label{eq:TranslationalSurfaceParametrization}
\bfP := \bfc_1\oplus \bfc_2: \KK^2 \dashrightarrow \SSS\subset \KK^3, \qquad
(t,s) \longmapsto \bfc_1(t) \oplus \bfc_2(s).
\end{equation}
In particular, if $\bfc_1,\bfc_2$ are rational$/$meromorphic, then $\bfP$ is rational$/$meromorphic as well.

The focus of this paper is on \emph{real} surfaces of translation, due to their applicability in computer-aided geometric design. These are defined as nondegenerate (i.e., two-dimensional) real surfaces that are the real part $\Re(\SSS) = \SSS \cap \RR^3$ of $\SSS = \CCC_1 \oplus \CCC_2$ as defined above.

Can we obtain real parametrizations of real surfaces of translation? When $\bfc_1, \bfc_2$ have real coefficients, then $(\bfc_1 \oplus \bfc_2) (t,s)$ will be real for any real parameters $t, s$. However, the real surface of translation can contain an additional 1-dimensional singular locus coming from complex conjugate points on $\CCC_1,\CCC_2$ with complex conjugate parameters.

If $\bfc_1, \bfc_2$ have complex coefficients, then it is in general difficult (and not always possible) to provide a parametrization of $\SSS$ with real coefficients. However, a real parametrization of $\Re(\SSS)$ can be obtained in the special case of complex conjugate curves $\CCC_1, \CCC_2$. To be precise, let $\overline{\phantom{z}}$ denote the map that takes the complex conjugation of complex numbers and complex vectors (component-wise), as well as their sets (element-wise). Moreover, for scalar- or vector-valued rational (or meromorphic) functions $\bfc$, we let $\overline{\bfc}$ denote the function resulting from conjugating the coefficients of $\bfc$.

With $\bfc := \bfc_1 = \overline{\bfc}_2$ and $\KK=\CC$ in \eqref{eq:CurveParametrizations}, we have conjugated parametric curves
\begin{align} 
\bfc & = (x,y,z):U\subset \CC \longrightarrow \CCC\subset \CC^3,\\
\overline{\bfc} & = (\overline{x},\overline{y},\overline{z}) : \overline{U}\subset \CC \longrightarrow \overline{\CCC}\subset \CC^3.
\end{align}
In this case $\bfc \oplus \overline{\bfc}$, restricted to complex conjugate parameters $t = u+iv$ and $s = u - iv$, provides a real parametrization in terms of $u$ and $v$. More precisely, precomposing $\bfc\oplus\overline{\bfc}$ with the embedding $\iota: (u, v) \longmapsto (u + iv, u - iv)$ of $\RR^2$ into $\CC^2$
yields a parametrization 
\begin{align}\label{eq:re2}
\bfP := \big(\bfc \oplus \overline{\bfc}\big) \circ \iota : \RR^2 & \longrightarrow \Re(\SSS)\subset \RR^3,\\
(u,v) & \longmapsto 
 \frac{\bfc(u+iv) + \overline{\bfc}(u - iv)}{2}
 = \bfc(t)\oplus \overline{\bfc}(\overline{t}),\notag
\end{align}
of (part of) the real part of $\SSS$. Note that \eqref{eq:re2} is a parametrization with real coefficients. This case will be considered in detail in Section \ref{sec:MinimalSurfaces1}.

\subsubsection{Multitranslational surfaces}
Note that a surface of translation does not have a unique generator pair. Indeed, if $\tau_{\bfv}: \bfx \longmapsto \bfx + {\bfv}$ denotes the translation by a vector ${\bfv}$ and $(\CCC_1,\CCC_2)$ is a generator pair of $\SSS$, then
\begin{equation}\label{eq:generatorpairs}
(\DDD_1,\DDD_2) := \big(\tau_{\bfv}(\CCC_i), \tau_{-\bfv}(\CCC_j)\big),\qquad \{i,j\} = \{1,2\},\qquad {\bfv}\in \CC^3,
\end{equation}
also has the property that $\DDD_1\oplus \DDD_2 = \SSS$. Indeed, $\DDD_1\oplus \DDD_2$ is parametrized by \begin{equation}\label{expla}(\bfc_i+\bfv)\oplus(\bfc_j-\bfv)=\dfrac{(\bfc_i+\bfv)+(\bfc_j-\bfv)}{2}=\dfrac{\bfc_i+\bfc_j}{2}=\bfc_i\oplus \bfc_j,
\end{equation}
which, since $i\neq j$, parametrizes $\SSS$. Notice that if $\bfv$ is nonreal and the parametrizations of $\CCC_1,\CCC_2$ are real or complex conjugate, then the parametrizations of $\DDD_1,\DDD_2$ are no longer real or complex conjugate. Nevertheless, because of \eqref{expla}, their parametrizations still generate a real parametrization of the same real surface of translation $\SSS$.

However, some surfaces of translation $\SSS$ possess more exotic alternative generator pairs for which \eqref{eq:generatorpairs} does not hold. In that case we say that $\SSS$ is \emph{multitranslational}. This definition includes the class of double translational surfaces considered by Sophus Lie and Poincar\'e (c.f. \cite[\S 1]{Chern}).

We will require the following technical assumptions. The first is that the generator curve parametrizations $\bfc_1,\bfc_2$ are \emph{proper}, i.e., injective for all but finitely many values of the parameter. Secondly, we will assume that $\SSS$ is not multitranslational. Thirdly, we will assume that $\CCC_1,\CCC_2$ are not planar curves contained in the same plane or in parallel planes; in that case $S$ would be a plane.

\subsection{Minimal surfaces}\label{sec:MinimalSurfaces1}
\emph{Minimal surfaces} are surfaces with constant vanishing mean curvature. Minimal surfaces are sometimes defined as surfaces of the smallest area spanned by a given closed space curve, with illustrative physical examples provided by soap films spanning a given wireframe. For us, however, the most relevant fact about minimal surfaces is that they are surfaces of translation with complex conjugate generating pair (c.f. \cite[\S 5.4]{Struik}). In particular, this follows from a classic representation of minimal surfaces, called the \emph{Weierstrass form} of the surface. Let $U\subset \CC$ be a simply-connected domain, and let $t_0$ be a point in the interior of $U$. Weierstrass proved (see Theorem 1 in page 112 of \cite[\S 3.3]{DHKW})  
that any nonplanar minimal surface $\SSS$ defined over a simply-connected parameter domain can be parametrized as the real part
\begin{equation}\label{re1}
\bfP(u,v)=\Re\big(\bfc(t)\big)=\Re\big(\bfc(u+iv)\big),
\qquad (u,v)\in \RR^2,
\end{equation}
of the complex curve $\CCC \subset \CC^3$ with parametrization
\[ \bfc = (x,y,z):U\subset \CC \longrightarrow \CC^3,\]
where, with $i^2 = -1$,
\begin{equation}\label{weierstrass}
\bfc(t) = \left( \int_{t_0}^t f(\gamma)\frac{1-g(\gamma)^2}{2}\rmd\gamma, i\int_{t_0}^t f(\gamma)\frac{1+g(\gamma)^2}{2}\rmd\gamma, \int_{t_0}^t f(\gamma)g(\gamma) \rmd\gamma\right).
\end{equation}
Here $f$ is holomorphic and $g$ is meromorphic such that $fg^2$ is holomorphic in $U$. This condition implies that, within $U$, any pole of $g$ of order $k$ is located at a zero of $f$ of order at least $2k$. In that case the integrands in \eqref{weierstrass} are holomorphic in $U$, and the integrals are well-defined for any $t\in U$. In several cases the functions $f,g$ are real functions which are real analytic on an interval containing $t_0$, and $U$ is an open subset of $\CC$ where the components of $\bfc(t)$ can be analytically extended; some examples can be found, for instance, in Chapter~22 of \cite{Gray}. A straightforward calculation shows that $\bfc$ is an \emph{isotropic curve}, i.e.,
\[ x'(t)^2 + y'(t)^2 + z'(t)^2 = 0. \]

In the context of minimal surfaces, $\bfc$ is sometimes called a \emph{minimal curve}. We will say that $\bfc$ \emph{generates} the minimal surface. Thus, any minimal surface $\SSS$ can be parametrized as \eqref{re1}, where $\bfc$ generates $\SSS$. Notice that this is a \emph{real} parametrization of $\SSS$, i.e., $\bfP(t,s)$ has real coefficients.
Since the real part of a complex number is equal to the average of itself and its complex conjugate, the parametrization \eqref{re1} takes the form \eqref{eq:re2}, and $\SSS=\CCC\oplus \overline{\CCC}$ is a surface of translation with complex conjugate generator pair $(\CCC,\overline{\CCC})$.

\section{Translational surfaces}
\subsection{Detecting affine equivalences and symmetries}
In this subsection we will initially assume that the generator pairs are rational, and later we will provide an observation (Remark \ref{extension-meromorphic}) that allows us to extend our results to the meromorphic case. We start with the main result. 

\begin{theorem} \label{th-fundam-trans}
Let $\SSS_1=\CCC_1\oplus \CCC_2$, $\SSS_2=\DDD_1\oplus \DDD_2$ be two rational surfaces of translation, which are not multitranslational. Then $\bff(\bfx) = \bfM \bfx + \bfb$ is an affine equivalence between $\SSS_1$ and $\SSS_2$ if and only if 
\begin{enumerate}
\item there exists ${\bfv}\in \KK^3$ such that $\bff(\CCC_1)=\tau_{\bfv}(\DDD_1)$ and $\bff(\CCC_2)=\tau_{-{\bfv}}(\DDD_2)$, or
\item there exists ${\bfv}\in \KK^3$ such that $\bff(\CCC_1)=\tau_{\bfv}(\DDD_2)$ and $\bff(\CCC_2)=\tau_{-\bfv}(\DDD_1)$.
\end{enumerate}	
\end{theorem}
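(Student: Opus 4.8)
The plan is to reduce the entire statement to three facts already available in the excerpt: the distributivity of affine maps over $\oplus$ in \eqref{distrib}, the translation invariance of a surface of translation recorded in \eqref{expla}, and the \emph{definition} of a non-multitranslational surface via \eqref{eq:generatorpairs}. The symmetry of $\oplus$ (it is an average, hence $\bfp\oplus\bfq=\bfq\oplus\bfp$) will also be used to swap generators.

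For the ``if'' direction, suppose condition 1 holds. I would apply the set-level form of \eqref{distrib}, namely $\bff(\CCC_1\oplus\CCC_2)=\bff(\CCC_1)\oplus\bff(\CCC_2)$, which holds because $\bff\big(\tfrac{\bfp+\bfq}{2}\big)=\tfrac{\bff(\bfp)+\bff(\bfq)}{2}$. This gives $\bff(\SSS_1)=\tau_{\bfv}(\DDD_1)\oplus\tau_{-\bfv}(\DDD_2)$, and by \eqref{expla} translating the two generators by opposite vectors leaves the surface unchanged, so the right-hand side equals $\DDD_1\oplus\DDD_2=\SSS_2$. Condition 2 is identical, using in addition the symmetry of $\oplus$ to reorder $\DDD_2\oplus\DDD_1=\DDD_1\oplus\DDD_2$.

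For the ``only if'' direction, suppose $\bff(\SSS_1)=\SSS_2$. Distributivity gives $\SSS_2=\bff(\CCC_1)\oplus\bff(\CCC_2)$, so $(\bff(\CCC_1),\bff(\CCC_2))$ is a second generator pair of $\SSS_2$. Since $\bff$ is a nonsingular affine map and $\CCC_1,\CCC_2$ carry proper rational parametrizations, $\bff(\CCC_1),\bff(\CCC_2)$ do as well, and the nondegeneracy and non-parallel-planes assumptions are preserved since $\bff$ respects dimension and parallelism; thus this is an admissible generator pair. Because $\SSS_2$ is not multitranslational, by definition every admissible generator pair of $\SSS_2$ is obtained from the reference pair $(\DDD_1,\DDD_2)$ through the move \eqref{eq:generatorpairs}. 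Comparing $(\bff(\CCC_1),\bff(\CCC_2))$ with $(\DDD_1,\DDD_2)$ therefore produces a vector $\bfv$ and an ordering $\{i,j\}=\{1,2\}$ with $\bff(\CCC_1)=\tau_{\bfv}(\DDD_i)$ and $\bff(\CCC_2)=\tau_{-\bfv}(\DDD_j)$; the two orderings yield conditions 1 and 2.

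The main obstacle is making the ``only if'' step fully rigorous at the level of the definition of non-multitranslational: one must confirm that $(\bff(\CCC_1),\bff(\CCC_2))$ really is an admissible generator pair of the same type as $(\DDD_1,\DDD_2)$, so that the dichotomy forced by the hypothesis genuinely applies to it, rather than to some degenerate or improperly parametrized decomposition. A secondary point requiring care is the field of definition of $\bfv$: the move \eqref{eq:generatorpairs} permits $\bfv\in\CC^3$, while the statement asserts $\bfv\in\KK^3$; when $\KK=\RR$ one argues that $\bfv$, being the difference of points on curves defined over $\RR$ and transported by the real map $\bff$, is itself real. Beyond these two points, the argument is a direct unwinding of the definitions.
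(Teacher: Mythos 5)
Your proof is correct and takes essentially the same route as the paper's: both directions rest on distributivity \eqref{distrib}, the translation move \eqref{expla}, and the non-multitranslational hypothesis applied to the two generator pairs $(\bff(\CCC_1),\bff(\CCC_2))$ and $(\DDD_1,\DDD_2)$ of $\SSS_2$ via \eqref{eq:generatorpairs}. Your additional remarks on the admissibility of the transformed generator pair and the reality of $\bfv$ are points the paper leaves implicit, but they do not alter the argument.
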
 

\begin{proof}
Let $\bfc_1,\bfc_2,\bfd_1,\bfd_2$ be the parametrizations of $\CCC_1,\CCC_2,\DDD_1,\DDD_2$. Let $\tilde\CCC_1,\tilde\CCC_2$ be the curves defined by the parametrizations $\tilde{\bfc}_1 := \bff \circ \bfc_1$, $\tilde{\bfc}_2 := \bff \circ \bfc_2$.

``$\Longrightarrow$'': Since $f$ is an affine equivalence between $\SSS_1$ and $\SSS_2$, any point of $\SSS_2$ can be written as $\big(\tilde{\bfc}_1(t)+\tilde{\bfc}_2(s)\big)/2$, implying $\SSS_2=\tilde\CCC_1\oplus \tilde\CCC_2$. Thus, $(\tilde\CCC_1,\tilde\CCC_2)$ and $(\DDD_1,\DDD_2)$ are both generator pairs of $\SSS_2$. Since $\SSS_2$ is not multitranslational by hypothesis, the result follows. 

``$\Longleftarrow$'': We just prove Case 1; Case 2 is analogous. Since $(\DDD_1,\DDD_2)$ is a generator pair of $\SSS_2$, then $\big(\bff(\CCC_1), \bff(\CCC_2) \big) = \big(\tau_{\bfv}(\DDD_1),\tau_{-\bfv}(\DDD_2)\big)$ is also a generator pair of $\SSS_2$. Since $\bff$ is distributive with respect to $\oplus$, we get
\[ \SSS_2 = \bff(\CCC_1)\oplus \bff(\CCC_2) = \bff(\CCC_1\oplus \CCC_2) = \bff(\SSS_1), \]
which proves the claim. 
\end{proof}

Writing $\bfb_1 = \bfb - \bfv$ and $\bfb_2 = \bfb + \bfv$, we can rephrase Theorem \ref{th-fundam-trans} as follows.

\begin{corollary}\label{cor1}
Let $\SSS_1=\CCC_1\oplus \CCC_2$, $\SSS_2=\DDD_1\oplus \DDD_2$ be two rational surfaces of translation, which are not multitranslational. Then $\SSS_1$ and $\SSS_2$ are affinely equivalent if and only if there exist two nonsingular affine maps (with identical matrix)
\[ \bff_1(\bfx)=\bfM\bfx+\bfb_1, \qquad \bff_2(\bfx)=\bfM \bfx+\bfb_2\]
such that either
\begin{enumerate}
\item $\bff_1(\CCC_1)=\DDD_1,\ \bff_2(\CCC_2)=\DDD_2$, or
\item $\bff_1(\CCC_1)=\DDD_2,\ \bff_2(\CCC_2)=\DDD_1$,
\end{enumerate}
In that case $\bff(\bfx)=\bfM \bfx+\bfb_1\oplus \bfb_2$ is an affine equivalence between $\SSS_1$ and $\SSS_2$. 
\end{corollary}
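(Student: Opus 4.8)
The plan is to show that Corollary \ref{cor1} is simply a restatement of Theorem \ref{th-fundam-trans} under the change of variables $\bfb_1 = \bfb - \bfv$, $\bfb_2 = \bfb + \bfv$, so the proof amounts to translating the two conditions and then observing that the resulting affine equivalence has the claimed form. Since the theorem is an ``if and only if'' that is already proved, I would not reprove the geometric content; instead I would verify that the two formulations are equivalent as logical statements and that no information is lost in the substitution.

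First I would record that the substitution is invertible: given any $\bfb, \bfv \in \KK^3$ we recover $\bfb_1, \bfb_2$, and conversely given $\bfb_1, \bfb_2$ we set $\bfb := \bfb_1 \oplus \bfb_2 = (\bfb_1 + \bfb_2)/2$ and $\bfv := (\bfb_2 - \bfb_1)/2$, so that $\bfb - \bfv = \bfb_1$ and $\bfb + \bfv = \bfb_2$. This bijection between pairs $(\bfb, \bfv)$ and pairs $(\bfb_1, \bfb_2)$ is what lets us pass freely between the two descriptions.

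Next I would translate condition 1 of the theorem. The map $\tau_{\bfv} \circ \bfd_1$ parametrizes $\tau_{\bfv}(\DDD_1)$, so the condition $\bff(\CCC_1) = \tau_{\bfv}(\DDD_1)$ reads $\bfM\bfc_1 + \bfb = \bfd_1 + \bfv$ up to reparametrization, i.e.\ $\bfM\bfc_1 + (\bfb - \bfv)$ parametrizes $\DDD_1$; with $\bfb_1 = \bfb - \bfv$ this is exactly $\bff_1(\CCC_1) = \DDD_1$. Similarly $\bff(\CCC_2) = \tau_{-\bfv}(\DDD_2)$ becomes $\bfM\bfc_2 + (\bfb + \bfv)$ parametrizes $\DDD_2$, i.e.\ $\bff_2(\CCC_2) = \DDD_2$. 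The same computation applied to condition 2 yields $\bff_1(\CCC_1) = \DDD_2$ and $\bff_2(\CCC_2) = \DDD_1$. Thus the disjunction in the theorem matches the disjunction in the corollary verbatim under the substitution, and existence of $\bfv$ corresponds to existence of the pair $(\bfb_1, \bfb_2)$ with common matrix $\bfM$.

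Finally I would identify the affine equivalence. In the theorem the equivalence is $\bff(\bfx) = \bfM\bfx + \bfb$, and since $\bfb = \bfb_1 \oplus \bfb_2$ we may write $\bff(\bfx) = \bfM\bfx + \bfb_1 \oplus \bfb_2$, which is the stated form; conversely, given $\bff_1, \bff_2$ as in the corollary, this same $\bff$ is the affine equivalence furnished by the theorem. I do not anticipate a genuine obstacle here, as the content is entirely bookkeeping; the one point requiring a little care is the reparametrization implicit in equalities such as ``$\bfM\bfc_1 + \bfb_1$ parametrizes $\DDD_1$'' — one must note that the equation $\bff(\CCC_1) = \tau_{\bfv}(\DDD_1)$ is an equality of \emph{curves}, not of parametrizations, so it only asserts that the two maps have the same image, which is precisely what $\bff_1(\CCC_1) = \DDD_1$ also asserts. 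With that understood, the equivalence of the two formulations is immediate from Theorem \ref{th-fundam-trans}.
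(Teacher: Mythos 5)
Your proposal is correct and matches the paper exactly: the paper offers no separate proof of Corollary \ref{cor1}, presenting it as an immediate rephrasing of Theorem \ref{th-fundam-trans} via the substitution $\bfb_1 = \bfb - \bfv$, $\bfb_2 = \bfb + \bfv$, which is precisely the bookkeeping you carry out (including the invertibility of the substitution, $\bfb = \bfb_1 \oplus \bfb_2$, and the observation that the conditions are equalities of curves rather than parametrizations).
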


Thus, Corollary \ref{cor1} allows to transfer the affine equivalence detection problem from surfaces of translation to their generating space curves. In order to do this, we recall here the following result from \cite{HJ18}; the result uses the fact that the only birational transformations of the complex line are the \emph{M\"obius transformations} \cite{SWP}, i.e., rational functions 
\begin{equation}\label{eq:Moebius}
\varphi: \CC \dashrightarrow \CC, \qquad \varphi(t) = \frac{a t + b}{c t + d}, \qquad ad - bd \neq 0.
\end{equation}

\begin{proposition}\label{thm:diagram}
Let $\CCC,\DDD\subset \CC^3$ be two rational space curves, properly parametrized by $\bfc,\bfd$. Then $\CCC,\DDD$ are affinely equivalent if and only if there exists an affine map $\bff(\bfx)=\bfM \bfx+\bfb$ and a M\"obius transformation $\varphi$ such that 
\begin{equation}\label{af}
\bff\circ\bfc=\bfd\circ\varphi,
\end{equation}
\end{proposition}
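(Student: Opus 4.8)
The plan is to prove both implications by passing between the curves and their proper parametrizations, with the key tool being the fact recalled just above the statement that the only birational self-maps of the complex line are the M\"obius transformations.

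For the forward implication I would assume $\bff(\CCC)=\DDD$ for a nonsingular affine $\bff$. Then $\bff\circ\bfc$ is a rational parametrization of $\bff(\CCC)=\DDD$, and since $\bfc$ is proper and $\bff$ is injective, $\bff\circ\bfc$ is again proper. Thus $\bff\circ\bfc$ and $\bfd$ are two proper parametrizations of the same rational curve $\DDD$. The heart of the argument is to show that any two proper parametrizations of a rational curve differ by a M\"obius reparametrization. Because $\bfd$ is proper, it admits a rational inverse $\bfd^{-1}:\DDD\dashrightarrow\CC$; composing, the rational map $\varphi:=\bfd^{-1}\circ\bff\circ\bfc:\CC\dashrightarrow\CC$ is defined for all but finitely many $t$, and properness of both parametrizations shows $\varphi$ is birational. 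Since a birational self-map of $\CC$ is a M\"obius transformation, we obtain $\bff\circ\bfc=\bfd\circ\varphi$, as desired.

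For the converse I would start from the relation $\bff\circ\bfc=\bfd\circ\varphi$ and take images. As $\varphi$ is a M\"obius transformation, it is a birational (hence dominant) self-map of $\CC$, so the image of $\bfd\circ\varphi$ coincides with the image of $\bfd$, namely $\DDD$. On the other hand the image of $\bff\circ\bfc$ is exactly $\bff(\CCC)$. Equating the two (Zariski closures of the) images yields $\bff(\CCC)=\DDD$, so the nonsingular affine map $\bff$ is an affine equivalence between $\CCC$ and $\DDD$.

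The main obstacle is the technical lemma in the forward direction, that two proper parametrizations of the same rational curve differ by a M\"obius transformation. The delicate points are that properness must be used both to invert $\bfd$ rationally and to guarantee that the composition $\varphi$ is genuinely birational rather than merely dominant, and that one must track the ``all but finitely many values'' exceptional sets so that $\varphi$ extends to an honest automorphism of the projective line. Since this equivalence is quoted from \cite{HJ18}, in a self-contained treatment I would invoke the standard theory of proper rational parametrizations (e.g.\ \cite{SWP}) to justify this step.
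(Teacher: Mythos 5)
Your argument is correct, and it is the standard proof of this fact: the paper itself offers no proof, recalling the proposition from \cite{HJ18} (with the birationality fact justified via \cite{SWP}), and your route---inverting the proper parametrization $\bfd$ rationally, composing to get a birational self-map of the line in the forward direction, and comparing Zariski closures of images in the converse---is precisely the argument used in that literature. Your handling of the delicate points (properness needed both for $\bfd^{-1}$ and for birationality of $\varphi$ rather than mere dominance, and working up to finitely many exceptional points) is appropriate, so nothing is missing relative to the paper's treatment.
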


In \cite{HJ18}, it is shown how to use Proposition \ref{thm:diagram} to solve the affine equivalence problem for space curves (and in fact, for projective equivalences between rational curves in any dimension). The rough idea is that \eqref{af} leads to a polynomial system, linear in the entries of $\bfM$ and the components of $\bfb$. Some of the equations of this system can be used to write the entries of $\bfM$ and the components of $\bfb$ in terms of the parameters of the M\"obius transformation $\varphi$. When these expressions are plugged into the remaining equations, we get polynomial conditions for the parameters of the M\"obius transformation $\varphi$. Computing these parameters then leads to the affine equivalences themselves. Combining this with Corollary \ref{cor1}, we arrive at Algorithm {\tt Affine-Equiv-Trans} for solving the affine equivalence problem for surfaces of translation. 

\begin{algorithm}[t!]
\begin{algorithmic}[1]
\REQUIRE Two surfaces of translation $\SSS_1=\CCC_1\oplus\CCC_2$, $\SSS_2=\DDD_1\oplus \DDD_2$, rationally parametrized by 
$\bfP_1, \bfP_2$ as in \eqref{eq:TranslationalSurfaceParametrization} in the real case or \eqref{eq:re2} in the complex conjugate case, where the underlying generating curve pairs $\CCC_1,\CCC_2$ and $\DDD_1,\DDD_2$ are not coplanar and given by proper, rational parametrizations.
\ENSURE The affine equivalences $\bff(\bfx)=\bfM \bfx+\bfb$ between $\SSS_1$ and $\SSS_2$, or the statement that $\SSS_1$ and $\SSS_2$ are not affinely equivalent.
\FOR{$i=1,2$ and $j=1,2$}
\STATE{Determine the affine equivalences $\bff_{ij}:\CCC_i\longrightarrow \DDD_j$.}
\ENDFOR
\STATE{For any pair $(\bff_{11}(\bfx)=\bfM\bfx+\bfb_{11}, \bff_{22}(\bfx)=\bfM\bfx+\bfb_{22})$ with equal matrix,\\
 {\bf return} ``{\tt $\bff(\bfx)=\bfM \bfx+\bfb_{11}\oplus \bfb_{22}$ is an affine equiv. between $\SSS_1,\SSS_2$}}''
\STATE{For any pair $(\bff_{12}(\bfx)=\bfM\bfx+\bfb_{12}, \bff_{21}(\bfx)=\bfM\bfx+\bfb_{21})$ with equal matrix,\\
 {\bf return} ``{\tt $\bff(\bfx)=\bfM \bfx+\bfb_{12}\oplus \bfb_{21}$ is an affine equiv. between $\SSS_1,\SSS_2$}}''
\IF{no such affine equivalence pair with equal matrix is found}
\STATE{{\bf return} ``{\tt $\SSS_1$ and $\SSS_2$ are not affinely equivalent}}''
\ENDIF
\end{algorithmic}
\caption{Affine-Equiv-Trans}\label{affine-equiv-trans}
\end{algorithm}

Notice that the complexity of Algorithm {\tt Affine-Equiv-Trans} is dominated by the complexity of computing affine equivalences between the generator curves. In this regard, we refer the interested reader to the exhaustive analysis of the performance of the algorithm for computing affine equivalences between curves carried out in \cite{HJ18}. In particular, this analysis shows that the algorithm is very efficient even for high degree.

Setting ${\mathcal S}_1={\mathcal S}_2$ in Algorithm {\tt Affine-Equiv-Trans}, and requiring $\bff(\bfx)=\bfM\bfx + \bfb$ to be an isometry (i.e., $\bfM$ orthogonal), leads to an analogous algorithm for computing symmetries of translational surfaces.

\begin{remark}\label{number}
Note that in Algorithm {\tt Affine-Equiv-Trans} we need to compute the affine equivalences $\bff_{ij}: \CCC_i \longrightarrow \DDD_j$ of the four curve pairs $\{\CCC_1, \CCC_2\} \times \{\DDD_1, \DDD_2\}$.
\end{remark} 

\begin{remark}\label{extension-meromorphic}
Let $\varphi: \CC \longrightarrow \CC$ be a meromorphic function, i.e., a quotient $\varphi = \varphi_1/\varphi_2$ of two holomorphic functions. Embedding the complex plane $\CC$ as an affine chart of the complex projective line $\PP^1_\CC$ through the map $z \longmapsto [z: 1]$, the meromorphic function $\varphi$ can be extended to an analytic function 
\[ \PP^1_\CC \longrightarrow \PP^1_\CC,\qquad [t:s] \longmapsto [\varphi_1 (t/s): \varphi_2 (t/s)].\]
It follows from Liouville's theorem that every such analytic function on $\PP^1_\CC$ is rational \cite[\S 2.9]{Forster}. % Or Shafarevich, Algebraic Geometry 1, p. 24, example 1.
Hence, perhaps surprisingly, also the bi-analytic bijections on the complex projective line are M\"obius transformations. Due to this, results analogous to Theorem \ref{th-fundam-trans}, Corollary \ref{cor1} and  Proposition \ref{thm:diagram} hold for curves $\CCC, \DDD \subset \CC^3$ with proper meromorphic parametrizations $\bfc, \bfd$. However, we are unaware of the existence of algorithms for checking affine equivalence of generic meromorphic curves. Hence an analogous result to Algorithm {\tt Affine-Equiv-Trans} for meromorphic curves is not available at the moment.
\end{remark}

\begin{example}
Consider the twisted cubic curves
\[
\CCC: t\longmapsto \bfc(t) = \big(t, t^2, t^3\big), \qquad
\DDD: t\longmapsto \bfd(t) = \big(t^3, -t, t^2\big),
\]
as well as corresponding surfaces of translation $\CCC\oplus \CCC$ and $\DDD\oplus\DDD$ obtained by translating these curves along themselves (see Figure \ref{fig:twistedcubicsurfaces}). In this case Algorithm~\ref{affine-equiv-trans} simplifies, as it is only requires detecting affine equivalences $\bff: \CCC\longrightarrow \DDD$ (cf. Remark \ref{number}).

For ease of presentation we will restrict our attention to affine equivalences $\bff(\bfx)=\bfM\bfx + \bfb$ with zero constant term, i.e., $\bfb=0$. First we consider the case
where $\varphi$ as in \eqref{eq:Moebius} satisfies $d=1$. Let
\[ R=\QQ[a,b,c,m_{11},\ldots,m_{33}]\]
be the ring of polynomials with rational coefficients in the entries $m_{ij}$ of $\bfM$ and the coefficients $a, b, c$ of $\varphi$. Substituting a generic $\varphi$ (with $d=1$) together with a generic matrix $\bfM = [m_{ij}]_{i,j=1}^3$ in \eqref{af}, clearing denominators, and equating the coefficients of the monomials $1, t, t^2, \ldots$, one obtains a system of equations $\{ p_k = 0\}$, where the polynomials $p_k$ generate an ideal $I\subset R$. Using Sage with Singular as a back-end (see \cite{Muntingh}), we compute a Gr\"obner basis $G$ of $I$. After eliminating the variables $\{m_{ij}\}$, we obtain an elimination ideal in the ring $\QQ[a,b,c]$ with Gr\"obner basis
\[ \{ a^3c, a^2c^2, ac^2, b, d-1 \}. \]
It follows that $d=1$ (as imposed), $b=0$ and (since $ad-bc\neq 0$) in addition $c=0$ and $a\neq 0$, yielding the M\"obius transformations $\varphi_a(t) = at$. Substituting this back into $G$ one finds that all entries of $\bfM$ are zero, except for $m_{13}, m_{21}, m_{32}$, which satisfy $m_{21} = -a, m_{32} = m_{21}^2 = a^2$, and $m_{13} = -m_{32}^2/m_{21} = a^3$. A similar analysis with $d=0$ in \eqref{eq:Moebius} provides no new solutions.

We conclude that the affine equivalences (with zero constant term) between $\CCC$ and $\DDD$ and corresponding M\"obius transformations $\varphi$ satisfying \eqref{af} are
\[
\bff_a (\bfx) =
\begin{bmatrix}
0 & 0 & a^3\\ 
-a & 0 & 0 \\
0 & a^2 & 0
\end{bmatrix}\bfx,\qquad
\varphi_a(t) = a t,\qquad 0\neq a\in \RR.
\]
By Corollary \ref{cor1}, these are also affine equivalences between $\CCC\oplus \CCC$ and $\DDD\oplus\DDD$. Note that isometries are obtained for $a=\pm 1$.
\end{example}

\begin{figure}
    \centering
    \includegraphics[scale=1]{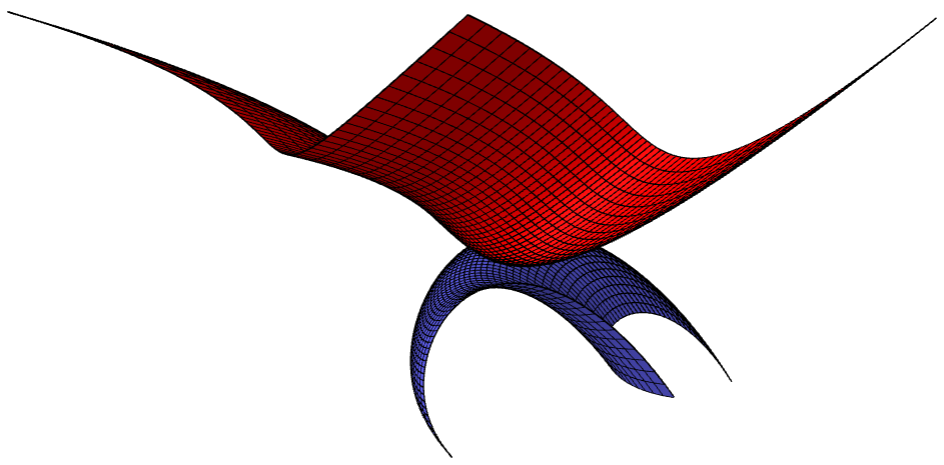}
    \caption{Isometric translational surfaces, each generated by translating a twisted cubic curve along itself.}
    \label{fig:twistedcubicsurfaces}
\end{figure}

\begin{example}\label{sec-example}
Let $S_1=\CCC_1\oplus \CCC_2$ be the translational surface generated by the curves 
\[
\begin{array}{l}
\CCC_1: t\longmapsto \bfc_1(t) = \left(\dfrac{t}{t^6+1},\dfrac{t^3}{t^6+1},\dfrac{t^5}{t^6+1}\right), \\
\CCC_2:s \longmapsto \bfc_2(s) = \left(s^2,\dfrac{s}{s^2+1},s^4-2\right).
\end{array}
\]
Furthermore, for $i=1,2$ we consider $\bff_i(\bfx)=\bfA\bfx+\bfb_i$, with 
\[
\bfA=\begin{bmatrix} 
\sqrt{2} & -1 & 2\sqrt{2}\\
1 & -2 & \sqrt{2}\\
-1 & -\sqrt{2} & 2
\end{bmatrix}, \qquad \bfb_1={\bf 0}, \qquad \bfb_2=
\begin{bmatrix} 1 \\ 0 \\ -1 \end{bmatrix},
\]
and we let $S_2=\DDD_1\oplus\DDD_2$ be the translational surface where $\DDD_i=\bff_i(\CCC_i)$ for $i=1,2$, parametrized by $\bfd_1(t)=(\bff_1\circ\bfc_1)(t-1)$, $\bfd_2(s)=(\bff_2\circ\bfc_2)(s-1)$. 

According to Corollary \ref{cor1}, the surfaces $S_1, S_2$ are affinely equivalent and satisfy $S_2=\bff(S_1)$, where 
\begin{equation}\label{foreseen}
\bff(\bfx)=\bfA\bfx+\bfb,\qquad \bfb=\bfb_1\oplus\bfb_2=
\begin{bmatrix}
1/2\\ 0 \\ -1/2
\end{bmatrix}.
\end{equation}
Using Maple 18 \cite{Maple}, we compute the affinities $\bff_{ij}$ between the $\CCC_i$ and the $\DDD_j$. For $i\neq j$ no affinity is found, while for $i=j$ we get 
\[
\bff_{11}(\bfx)=\bfA\bfx,\qquad \bff_{22}(\bfx)=\bfA\bfx+\bfb_2
\]
with corresponding M\"obius transformations
\[ \varphi_1(t)=t+1,\qquad \varphi_2(s)=s+1.\]
Thus, as expected we recover the affinity \eqref{foreseen} between $S_1$ and $S_2$. The interested reader can check \cite{Muntingh} for further detail on the computations.
\end{example}

\subsection{Designing symmetric translational surfaces}

We can apply the preceding results to construct symmetric translational surfaces. In order to do this, we observe that Corollary \ref{cor1} also holds when replacing ``affinely equivalent'' with ``isometric'', and ``affine equivalence/map'' with ``isometry''. In particular, Corollary \ref{cor1} provides two quick ways to generate a symmetric translational surface, which are summarized in the following result.

\begin{proposition} \label{sym-new}
Let ${\mathcal S}=\CCC_1\oplus \CCC_2$, and let $\bff:{\Bbb K}^3\to {\Bbb K}^3$ be a symmetry satisfying one of the following two conditions:
\begin{itemize}
    \item[(a)] $\bff$ is a common symmetry of $\CCC_1$ and $\CCC_2$;
    \item[(b)] $\bff$ is an involution, i.e., $\bff\circ \bff$ is the identity, and $\CCC_2=\bff(\CCC_1)$.
\end{itemize}
Then $\bff$ is a symmetry of ${\mathcal S}$.
\end{proposition}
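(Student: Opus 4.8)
The plan is to derive both cases as immediate applications of the isometric version of Corollary~\ref{cor1}, taking $\SSS_1 = \SSS_2 = \SSS$ and using the single generator pair $(\CCC_1,\CCC_2)$ on both sides (so $\DDD_1 = \CCC_1$, $\DDD_2 = \CCC_2$). In each case I would feed the corollary the two affine maps $\bff_1 = \bff_2 = \bff$, which trivially share the matrix $\bfM$; the isometry it then returns has translation part $\bfb_1 \oplus \bfb_2 = \bfb \oplus \bfb = \bfb$, so it is exactly $\bff$, now recognized as a symmetry of $\SSS$. Since $\bff$ is assumed to be a symmetry, $\bfM$ is orthogonal, and the standing hypotheses of the corollary (proper parametrizations, $\SSS$ not multitranslational) hold by assumption, so nothing further needs checking there.

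For case~(a) I would verify alternative~(1) of Corollary~\ref{cor1}: the hypothesis that $\bff$ is a common symmetry of the two generators says precisely $\bff(\CCC_1) = \CCC_1 = \DDD_1$ and $\bff(\CCC_2) = \CCC_2 = \DDD_2$, which is that alternative verbatim. For case~(b) I would instead target alternative~(2), namely $\bff(\CCC_1) = \DDD_2$ and $\bff(\CCC_2) = \DDD_1$, i.e.\ $\bff(\CCC_1) = \CCC_2$ and $\bff(\CCC_2) = \CCC_1$. The first of these is immediate from the hypothesis $\CCC_2 = \bff(\CCC_1)$.

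The one step that requires the extra hypothesis — and the only place any thought is needed — is obtaining the second relation $\bff(\CCC_2) = \CCC_1$ in case~(b). Here I would use that $\bff$ is an involution: from $\bff\circ\bff = \mathrm{id}$ we get $\bff^{-1} = \bff$, so applying $\bff$ to both sides of $\bff(\CCC_1) = \CCC_2$ gives $\CCC_1 = \bff(\CCC_2)$, completing alternative~(2). I do not anticipate any real obstacle; the content of the proposition is entirely in correctly matching the two design recipes~(a) and~(b) to the two alternatives of Corollary~\ref{cor1}, with the involution hypothesis being exactly what symmetrizes the single relation of~(b) into the pair of relations that alternative~(2) demands.
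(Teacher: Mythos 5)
Your proposal is correct and follows essentially the same route as the paper, which states Proposition \ref{sym-new} without a separate proof precisely because it is the immediate specialization of the isometric version of Corollary \ref{cor1} to $\SSS_1=\SSS_2=\SSS$ with $\DDD_i=\CCC_i$ and $\bff_1=\bff_2=\bff$: your matching of case (a) to alternative (1), and of case (b) to alternative (2) via $\bff^{-1}=\bff$, is exactly the intended reasoning. As a minor observation, you only invoke the sufficiency half of Corollary \ref{cor1}, which (as the ``$\Longleftarrow$'' part of the proof of Theorem \ref{th-fundam-trans} shows) rests solely on the distributivity \eqref{distrib} of $\bff$ over $\oplus$, so the non-multitranslational standing assumption plays no role in this direction.
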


We illustrate this result in the following examples. 

\begin{example}[Crunode]
Let $\CCC_1\subset \RR^3$ be the crunode curve parametrized by
\begin{equation}\label{eq:crunode}
\bfc_1(t)  = \big(x(t), y(t), z(t)\big) = \left( \frac{t}{t^4 + 1}, \frac{t^2}{t^4 + 1}, \frac{t^3}{t^4 + 1} \right).
\end{equation}
As shown in \cite{AHM15b}, this curve is invariant under the half-turn about the $y$-axis, as
\[
\begin{bmatrix}
-1 & 0 & 0\\ 0 & 1 & 0\\ 0 & 0 & -1
\end{bmatrix}
\begin{bmatrix} x(t)\\ y(t)\\ z(t) \end{bmatrix} = 
\begin{bmatrix} x(-t)\\ y(-t)\\ z(-t) \end{bmatrix}.
\]
as well as reflections in the planes $z \pm x = 0$, since
\[
\begin{bmatrix}
0 & 0 & \pm 1\\ 0 & 1 & 0\\ \pm 1 & 0 & 0
\end{bmatrix}
\begin{bmatrix} x(t)\\ y(t)\\ z(t) \end{bmatrix} = 
\begin{bmatrix} \pm z(t)\\ \phantom{+} y(t)\\ \pm x(t) \end{bmatrix} = 
\begin{bmatrix} x(\pm 1/t)\\ y(\pm 1/t)\\ z(\pm 1/t) \end{bmatrix}.
\]
We consider now the curves $\bfc_2$ parametrized as 
 $\bfc_2 = \bfc_1$, $\bfc_2 = \bfM_y \bfc_1$ and $\bfc_2 = \bfM_z \bfc_1$, where the matrices $\bfM_y$ and $\bfM_z$ denote reflections in the planes $y=0$ and $z=0$. Proposition \ref{sym-new} implies that the translational surfaces $\bfc_1\oplus \bfc_2$ are symmetric; these surfaces are shown in the first row of Table \ref{tab:surfaces}.
\end{example}

\begin{example}[Trefoil knot]
Let $\CCC_1 \subset \RR^3$ be the trefoil knot parametrized by
\begin{equation}\label{eq:trefoilknot}
\bfc_1(\theta) = \big(x(\theta), y(\theta), z(\theta)\big) := \big( \sin(\theta) + 2\sin(2\theta), \cos(\theta) - 2\cos(2\theta), -\sin(3\theta) \big).
\end{equation}
Since the parametrization is trigonometric (i.e., each component is polynomial in $\sin(\theta)$ and $\cos(\theta)$), the reparametrization $z=e^{i\theta}$ provides a rational, complex parametrization $\widehat{\bfc}_1:S^1 \longrightarrow \CCC_1$, with $S^1 := \{z\in \CC\,:\,|z|=1\}$ the unit circle. Then, applying the method in \cite{AQ2} (cf. \cite{Muntingh}),
one obtains rotational symmetries about the $z$-axis by angles $\pm 2\pi/3$, as well as a half-turn about the $y$-axis. Alternatively, these symmetries are determined directly by applying a rotation matrix and standard trigonometric identities. In particular,
\[
\begin{bmatrix}
\cos\varphi & -\sin\varphi\\\sin\varphi & \cos\varphi
\end{bmatrix}
\begin{bmatrix} x(\theta)\\ y(\theta) \end{bmatrix} = 
\begin{bmatrix}
\sin(\theta - \varphi) + 2\sin(2\theta + \varphi)\\
\cos(\theta - \varphi) - 2\cos(2\theta + \varphi
\end{bmatrix}
=
\begin{bmatrix}
x(\theta - \varphi)\\ y(\theta - \varphi)
\end{bmatrix}
\]
holds identically if and only if $\varphi\equiv 0$ or $\varphi \equiv \pm 2\pi/3$ modulo $2\pi$, while
\[
\begin{bmatrix}
-1 & 0 & 0\\ 0 & 1 & 0\\ 0 & 0 & -1
\end{bmatrix}
\begin{bmatrix} x(\theta)\\ y(\theta)\\ z(\theta) \end{bmatrix} = 
\begin{bmatrix} x(-\theta)\\ y(-\theta)\\ z(-\theta) \end{bmatrix}.
\]
Choosing $\bfc_2$ as in the previous example yields the symmetric translational surfaces $\bfc_1\oplus \bfc_2$ shown in the second row of Table \ref{tab:surfaces}.
\end{example}

\begin{table}[]
    \noindent\begin{tabular*}{\columnwidth}{@{\extracolsep{\stretch{1}}}*{4}{c}}    
    \toprule
    $\bfc_1$ & $\bfc_1\oplus \bfc_1$ & $\bfc_1\oplus \bfM_y\bfc_1$
     & $\bfc_1\oplus \bfM_z\bfc_1$ \\ \midrule
    Crunode \eqref{eq:crunode} \\
    \includegraphics[height=1.9cm]{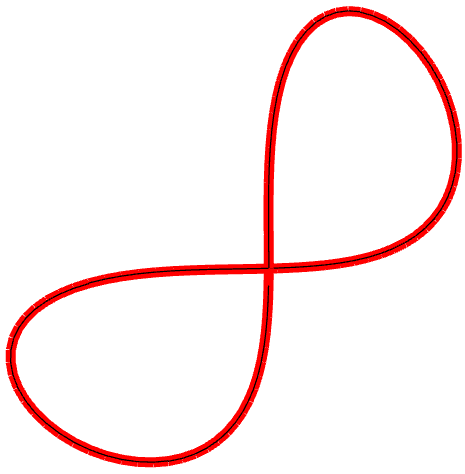} & \includegraphics[height=2.5cm]{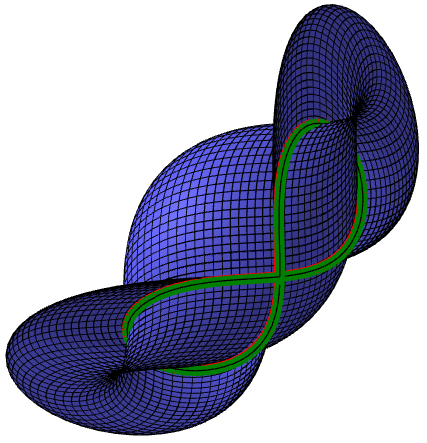} & 
    \includegraphics[height=2.5cm]{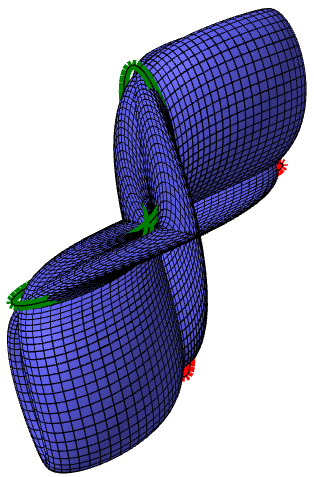} &
    \includegraphics[height=2.5cm]{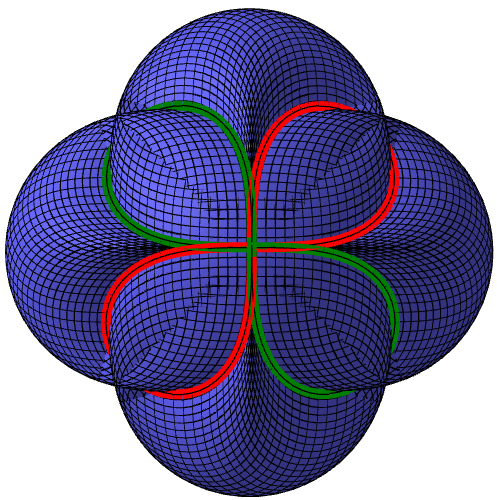} \\
    \midrule
    Trefoil knot \eqref{eq:trefoilknot} \\
    \includegraphics[height=1.9cm]{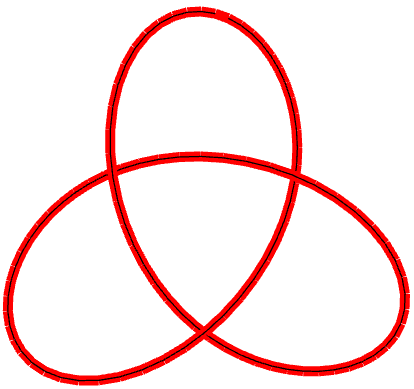} &
    \includegraphics[height=2.5cm]{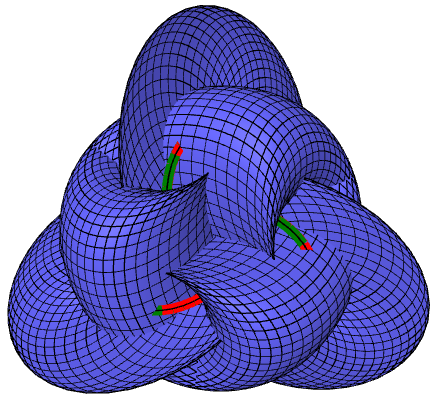} & 
    \includegraphics[height=2.5cm]{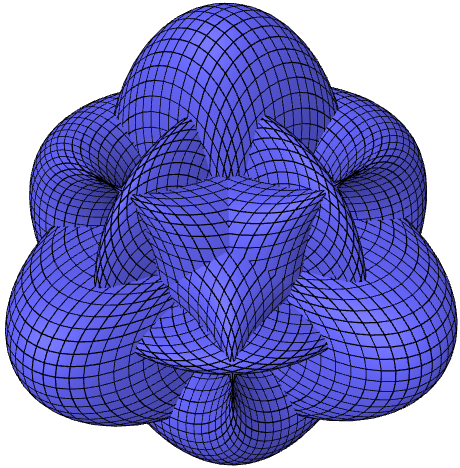} &
    \includegraphics[height=2.5cm]{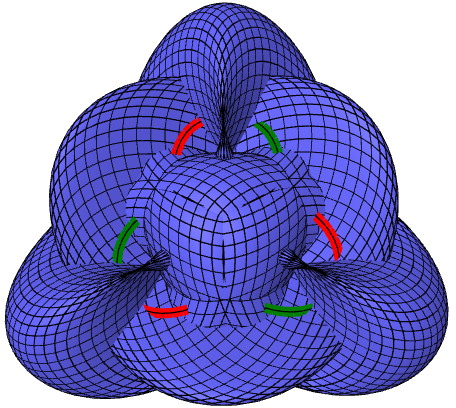} \\
    \bottomrule
    \end{tabular*}
    \caption{Translational surfaces inheriting symmetries shared by their generator pairs.}
    \label{tab:surfaces}
\end{table}

\section{Minimal surfaces}

Since minimal surfaces are translational surfaces with complex conjugate generator pairs (see Section \ref{sec:MinimalSurfaces1}) $({\mathcal C},\overline{\mathcal C})$, we can apply the previous results to minimal surfaces for which the minimal curve ${\mathcal C}$ is rational or meromorphic. In this case ${\mathcal S}$ is rational or meromorphic as well.

\subsection{Detecting affine equivalences and symmetries}\label{subsec-min}

In this subsection we consider minimal surfaces with parametrizations $\bfP$ as in \eqref{re1}, where $\bfc$ is rational or meromorphic. If $\bfP$ is rational, $\bfc$ must also be rational (see \cite[Corollary 22.25]{Gray}), and the parametrization \eqref{eq:re2} of the real part of $\SSS=\CCC\oplus \overline{\CCC}$ is rational in $u,v$. Note that while rational parametrizations $\bfc$ as in \eqref{weierstrass} come from rational pairs $f,g$, not every pair of rational functions $f,g$ provides a rational $\bfc$ \cite{VL17}. We will make the additional assumption that $\bfc$ is proper.

Recall that while we have presented an algorithm for detecting affine equivalences in the rational case, we do not have such an algorithm for the meromorphic case, since an algorithm for checking affine equivalence of meromorphic curves is currently absent. For this reason we will focus on the rational case, while making clear which results also hold in the meromorphic case.

Therefore, for $i=1,2$, given minimal surfaces $\SSS_i$ which are not multitranslational, rationally parametrized by $\bfP_i = \big(\bfc_i\oplus \overline{\bfc}_i\big) \circ \iota$ as in \eqref{eq:re2} with $\bfc_i$ proper, we can use Algorithm {\tt Affine-Equiv-Trans} to determine whether $\SSS_1,\SSS_2$ are affinely equivalent, by determining whether their minimal curves are affinely equivalent. As we only consider real affine equivalences, there is an additional advantage here: while the case of general surfaces of translation requires finding the affine equivalences between four pairs of space curves, this case only requires finding the affine equivalences between two pairs of space curves. Indeed, if there exist $\bfM\in \RR^{3\times 3}$, $\bfb\in \RR^3$ and $\varphi$ a M\"obius transformation satisfying
\[ \bfM \bfc_1(t)+\bfb=\bfc_2\circ \varphi(t), \]
conjugating this equation and substituting $s:=\overline{t}$ yields
\[ \bfM\overline{\bfc}_1(s)+\bfb=\overline{\bfc}_2 \circ \overline{\varphi}(s).
\]
Thus, if $\bfc_1$ and $\bfc_2$ parametrize complex space curves that are related by a real affine map, the same affine map relates the complex curves parametrized by $\overline{\bfc}_1$ and $\overline{\bfc}_2$ (although the corresponding M\"obius transformation is complex conjugated). A similar statement holds for $\bfc_1$ and $\overline{\bfc}_2$. Hence, we have the following result, which is another corollary of Theorem \ref{th-fundam-trans}. 

\begin{corollary}\label{cormin}
For $i=1,2$, let $\SSS_i$ be a minimal surface that is not multitranslational, rationally parametrized by $\bfP_i = (\bfc_i\oplus \overline{\bfc}_i)\circ \iota$ as in \eqref{eq:re2}, with $\bfc_i$ a proper parametrization of a minimal curve $\CCC_i$. Then $\bff$ is an affine equivalence between $\SSS_1$ and $\SSS_2$ if and only if, for some M\"obius transformation $\varphi$, one of the following cases holds:
\begin{enumerate}
\item $\bff \circ \bfc_1 = \bfc_2 \circ \varphi$\qquad ($\bff$ is an affine equivalence between $\CCC_1$ and $\CCC_2$)
\item $\bff \circ \bfc_1 = \overline{\bfc}_2 \circ \varphi$\qquad ($\bff$ is an affine equivalence between $\CCC_1$ and $\overline{\CCC}_2$)
\end{enumerate}
\end{corollary}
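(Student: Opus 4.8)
The plan is to recognize a minimal surface as a surface of translation with complex conjugate generator pair, $\SSS_i = \CCC_i \oplus \overline{\CCC}_i$, and then apply Theorem~\ref{th-fundam-trans} (equivalently Corollary~\ref{cor1}) with the substitution $(\DDD_1,\DDD_2) = (\CCC_2,\overline{\CCC}_2)$. The whole point is that the reality of $\bff$ collapses the two generator-matching conditions of the theorem into a single one, reducing the four curve comparisons of Remark~\ref{number} to the two stated cases. The engine is the conjugation identity already displayed before the corollary: if $\bfM,\bfb$ are real and $\bff\circ\bfc_1 = \bfc_2\circ\varphi$, then conjugating and substituting $s=\overline{t}$ gives $\bff\circ\overline{\bfc}_1 = \overline{\bfc}_2\circ\overline{\varphi}$, so a real $\bff$ sending $\CCC_1$ to $\CCC_2$ automatically sends $\overline{\CCC}_1$ to $\overline{\CCC}_2$.

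For the direction ``$\Longleftarrow$'' I would argue directly, bypassing Theorem~\ref{th-fundam-trans}. Assuming Case~1, $\bff\circ\bfc_1 = \bfc_2\circ\varphi$ gives $\bff(\CCC_1)=\CCC_2$, and the conjugation identity yields $\bff(\overline{\CCC}_1)=\overline{\CCC}_2$. Since composition with $\bff$ is distributive over $\oplus$ by \eqref{distrib},
\[
\bff(\SSS_1) = \bff\big(\CCC_1\oplus\overline{\CCC}_1\big) = \bff(\CCC_1)\oplus\bff(\overline{\CCC}_1) = \CCC_2\oplus\overline{\CCC}_2 = \SSS_2,
\]
so $\bff$ is an affine equivalence. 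Case~2, where $\bff(\CCC_1)=\overline{\CCC}_2$ and hence $\bff(\overline{\CCC}_1)=\CCC_2$, is identical after using commutativity of $\oplus$.

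For ``$\Longrightarrow$'' I would feed $\bff(\SSS_1)=\SSS_2$ into Theorem~\ref{th-fundam-trans}. In its Case~1 this produces $\bfv\in\CC^3$ with $\bff(\CCC_1)=\tau_{\bfv}(\CCC_2)$ and $\bff(\overline{\CCC}_1)=\tau_{-\bfv}(\overline{\CCC}_2)$. Conjugating the first relation (again using that $\bff$ is real) gives $\bff(\overline{\CCC}_1)=\tau_{\overline{\bfv}}(\overline{\CCC}_2)$; comparing with the second shows $\overline{\CCC}_2 = \tau_{\bfv+\overline{\bfv}}(\overline{\CCC}_2)$. Here I would invoke that an irreducible algebraic curve invariant under a nonzero translation must be a line (a generic line in the translation direction would meet the curve in infinitely many points and hence lie on it), together with the standing assumption that the minimal curve generates a nonplanar surface and so is not a line. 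This forces $\bfv+\overline{\bfv}=\mathbf 0$, i.e.\ $\bfv=i\bfw$ is purely imaginary, and the relation becomes $\bff\circ\bfc_1=\bfc_2\circ\varphi+i\bfw$. Case~2 of the theorem is handled symmetrically and leads to the stated Case~2.

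The main obstacle is precisely this residual purely imaginary translation $i\bfw$. I expect to dispose of it by the observation that replacing the minimal curve $\bfc_2$ by $\bfc_2+i\bfw$ leaves the real minimal surface $\bfP_2=(\bfc_2\oplus\overline{\bfc}_2)\circ\iota$ unchanged (the imaginary shift cancels upon taking real parts, equivalently upon precomposition with $\iota$) and preserves the complex conjugate generator structure; equivalently, the Weierstrass integration constant $t_0$ in \eqref{weierstrass} already leaves $\bfc_2$ undetermined up to an additive complex constant. Absorbing $i\bfw$ into $\bfc_2$ then yields exactly $\bff\circ\bfc_1=\bfc_2\circ\varphi$. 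Making this normalization clean, and checking that it respects properness of $\bfc_2$, is the delicate point; everything else is bookkeeping built on \eqref{distrib} and the conjugation identity.
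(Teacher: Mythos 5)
Your proof follows the same route as the paper: view $\SSS_i = \CCC_i \oplus \overline{\CCC}_i$, apply Theorem~\ref{th-fundam-trans} with $(\DDD_1,\DDD_2)=(\CCC_2,\overline{\CCC}_2)$, and use the conjugation identity (which is the paper's entire displayed argument) to collapse the four curve comparisons of Remark~\ref{number} to two; your ``$\Longleftarrow$'' via \eqref{distrib} is just the theorem's own proof specialized. Where you differ is that you actually track the translation vector $\bfv$ of Theorem~\ref{th-fundam-trans}, which the paper silently drops when passing to the corollary. Your handling of it is correct: conjugating $\bff(\CCC_1)=\tau_{\bfv}(\CCC_2)$ and comparing with the companion relation gives invariance of $\overline{\CCC}_2$ under $\tau_{\bfv+\overline{\bfv}}$, and your lemma (an irreducible curve invariant under a nonzero translation contains infinitely many collinear points, hence is a line; lines are excluded because a line generator yields a planar surface) legitimately forces $\bfv=i\bfw$ with $\bfw$ real.

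The residual $i\bfw$ you flag as the delicate point is not merely delicate --- it cannot be removed, so what you (and, implicitly, the paper) prove is the statement with $\bfc_2$ replaced by $\bfc_2+i\bfw$. Concretely, take the Enneper minimal curve $\bfc$ of \eqref{eq:minimalcurveEnneper} with $k=1$, set $\bfp := \bfc(i) = (4i/3,\,-2/3,\,-1)$, and let $\bfc_1 := \bfc - \bfp$ (which is in Weierstrass form \eqref{weierstrass} with base point $t_0=i$), $\bfc_2 := \bfc$, $\bfb := \Re(\bfp)$. Then $\bff=\tau_{\bfb}$ maps $\SSS_1$ onto $\SSS_2$, but $\bff\circ\bfc_1 = \bfc_2 - (4i/3,0,0)$, and matching the third, second and first components in turn shows no M\"obius $\varphi$ satisfies $\bff\circ\bfc_1=\bfc_2\circ\varphi$ (the third and second components force $\varphi(t)=t$, and the first gives $-4i/3=0$) nor $\bff\circ\bfc_1=\overline{\bfc}_2\circ\varphi$ (forced $\varphi(t)=-t$, contradiction in the first component); equivalently, $\bff(\CCC_1)=\tau_{i\bfw}(\CCC_2)$ equals neither $\CCC_2$ nor $\overline{\CCC}_2$. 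So the corollary as literally stated fails, and your absorption step is the correct repair: $\bfc_2+i\bfw$ is still proper (translation preserves injectivity) and its conjugate pair still parametrizes the same $\SSS_2$ via \eqref{eq:re2} --- though note it need not vanish anywhere, hence need not admit the Weierstrass form \eqref{weierstrass} for any base point, so the normalization must be phrased at the level of \eqref{eq:re2}, as you do, not of \eqref{weierstrass}. In short, your argument is sound and strictly more careful than the paper's own derivation; it establishes the corollary up to the inherent purely imaginary translation ambiguity of the minimal curve, which is exactly as much as is true.
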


Next we consider the symmetries of a rational minimal surface $\SSS$, rationally parametrized by $\bfP = (\bfc \oplus \overline{\bfc})\circ \iota$ as in \eqref{eq:re2}. The following result follows directly from Corollary~\ref{cormin}. 

\begin{proposition} \label{syms-minimal}
Let $\SSS$ be a rational minimal surface that is not multitranslational, rationally parametrized by $\bfP = (\bfc\oplus \overline{\bfc})\circ \iota$ as in \eqref{eq:re2}, with $\bfc$ a proper parametrization of a minimal curve $\CCC$. Then $\bff$ is a symmetry of $\SSS$ if and only if, for some M\"obius transformation $\varphi$, either of the following cases holds:
\begin{enumerate}
\item $\bff \circ \bfc = \bfc \circ \varphi$\qquad ($\bff$ is a symmetry of $\CCC$)
\item $\bff \circ \bfc = \overline{\bfc} \circ \varphi$\qquad ($\bff$ is an isometry mapping $\CCC$ onto $\overline{\CCC}$)
\end{enumerate}
\end{proposition}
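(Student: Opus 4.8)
The plan is to derive Proposition~\ref{syms-minimal} as a direct specialization of Corollary~\ref{cormin}, by setting $\SSS_1 = \SSS_2 = \SSS$ (and hence $\bfc_1 = \bfc_2 = \bfc$) and then imposing that the affine equivalence $\bff$ be a symmetry, i.e., an isometry of $\SSS$ onto itself. First I would recall that, by definition (Section~\ref{subsec-affine}), $\bff(\bfx) = \bfM\bfx + \bfb$ is a symmetry of $\SSS$ precisely when $\bff(\SSS) = \SSS$ and $\bfM$ is orthogonal. Since a symmetry is in particular an affine equivalence of $\SSS$ with itself, Corollary~\ref{cormin} applies verbatim with $\CCC_1 = \CCC_2 = \CCC$, and it tells us that $\bff$ is an affine equivalence of $\SSS$ with itself if and only if, for some M\"obius transformation $\varphi$, either $\bff\circ\bfc = \bfc\circ\varphi$ (Case~1) or $\bff\circ\bfc = \overline{\bfc}\circ\varphi$ (Case~2).

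The remaining work is purely a matter of translating the two cases of the corollary into the language of curve symmetries and isometries. For Case~1, the equation $\bff\circ\bfc = \bfc\circ\varphi$ is, by Proposition~\ref{thm:diagram}, exactly the statement that $\bff$ is an affine equivalence of $\CCC$ with itself; since here $\bfM$ is orthogonal, this says $\bff$ is a \emph{symmetry} of $\CCC$, as claimed in the parenthetical. For Case~2, the equation $\bff\circ\bfc = \overline{\bfc}\circ\varphi$ says that $\bff$ maps the curve $\CCC$ parametrized by $\bfc$ onto the conjugate curve $\overline{\CCC}$ parametrized by $\overline{\bfc}$; again with $\bfM$ orthogonal, $\bff$ is an isometry carrying $\CCC$ onto $\overline{\CCC}$. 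Thus the two cases of Proposition~\ref{syms-minimal} are literally the two cases of Corollary~\ref{cormin} specialized to the symmetric setting, with the orthogonality of $\bfM$ upgrading ``affine equivalence'' to ``symmetry/isometry'' throughout.

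The only genuine point requiring care is the logical equivalence in the reverse direction: I must confirm that any $\bff$ satisfying one of the two curve-level conditions with $\bfM$ orthogonal is in fact a \emph{symmetry} of $\SSS$, not merely an affine equivalence of $\SSS$ with some other surface. This follows because Corollary~\ref{cormin} (via Theorem~\ref{th-fundam-trans} and Corollary~\ref{cor1}) already guarantees $\bff(\SSS) = \SSS$ under either curve-level condition, and the hypothesis that $\bfM$ is orthogonal then makes $\bff$ an isometry fixing $\SSS$ setwise, which is precisely the definition of a symmetry. I would therefore note explicitly that the orthogonality condition on $\bfM$ is carried along unchanged between the surface level and the curve level, since $\bff$ is the \emph{same} affine map in both the hypothesis of the proposition and the conclusion of the corollary.

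I do not anticipate any substantial obstacle here: the statement is a corollary, and the proof is essentially a one-line invocation of Corollary~\ref{cormin} followed by the observation that orthogonality of $\bfM$ converts each instance of ``affine equivalence'' into ``symmetry'' or ``isometry.'' If there is a subtlety worth flagging, it is simply to make sure the reader sees that the real affine map $\bff$ is common to both the surface-level and curve-level statements, so that its orthogonality transfers directly and no separate argument is needed.
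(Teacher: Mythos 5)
Your proposal is correct and takes essentially the same route as the paper, whose entire proof is the remark that the proposition ``follows directly from Corollary~\ref{cormin}.'' Your specialization $\SSS_1 = \SSS_2 = \SSS$, $\bfc_1 = \bfc_2 = \bfc$, together with the observation that the orthogonality of $\bfM$ transfers unchanged between the surface level and the curve level (upgrading ``affine equivalence'' to ``symmetry/isometry'' in both directions of the equivalence), is exactly that one-line deduction, spelled out.
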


\begin{remark}\label{rem-ext-1}
Following Remark \ref{extension-meromorphic}, Corollary \ref{cormin} and Proposition \ref{syms-minimal} also hold when $\bfc$ is a proper parametrization with meromorphic components.
\end{remark}

We now introduce an additional assumption, namely that the functions $f,g$ defining $\bfc$ in \eqref{weierstrass} are rational functions with real coefficients. Many of the classical algebraic minimal surfaces found in the literature take this form (c.f. \cite{Ohdenal} and \cite[Chapter 22]{Gray}). Thus, let $\CCC\subset \CC^3$ be the complex curve parametrized by $\bfc = (x, y, z): U\subset \CC\longrightarrow \CC^3$ as in \eqref{weierstrass}. If the functions $f,g$ in \eqref{weierstrass} have real coefficients, conjugating \eqref{weierstrass} shows that the complex conjugate curve $\overline{\CCC}$ admits the parametrization 
\begin{equation}\label{parastar}
\overline{\bfc}(s) = \Big(\overline{x}(s),\overline{y}(s), \overline{z}(s)\Big) = \Big(x(s),-y(s),z(s)\Big),\quad s \in \overline{U}.
\end{equation}
For the trivial M\"obius transformation $\varphi(z)=z$ and reflection 
\begin{equation}\label{the-isom}
\bff(\bfx)=\bfM \bfx +\bfb, \qquad 
\bfM=\begin{bmatrix} 1 & 0 & 0 \\ 0 & -1 & 0 \\ 0 & 0 & 1 \end{bmatrix},\qquad
\bfb={\bf 0}
\end{equation}
in the plane $y=0$, the parametrization \eqref{parastar} yields
\[ \bff\circ \bfc=\overline{\bfc}\circ \varphi. \]
Hence Proposition \ref{syms-minimal} states that $\bff$ is a symmetry of the surface $\SSS$. Thus we recover the following known result, which reveals that minimal surfaces generated by minimal curves \eqref{weierstrass} constructed from real rational functions $f,g$ always have at least one mirror symmetry.

\begin{corollary} \label{oneplane}
Every minimal surface $\SSS$ parametrized by $\bfP$ as in \eqref{eq:re2} and \eqref{weierstrass}, with $f,g$ rational functions with real coefficients, is symmetric with respect to the plane $y=0$.
\end{corollary}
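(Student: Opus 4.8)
The plan is to verify the hypothesis of case~2 of Proposition~\ref{syms-minimal} for one concrete isometry, namely the reflection $\bff$ in the plane $y=0$ paired with the trivial M\"obius transformation, and then simply invoke that proposition. Since $\SSS$ is parametrized by $\bfP=(\bfc\oplus\overline{\bfc})\circ\iota$ as in \eqref{eq:re2} with $\bfc$ the minimal curve \eqref{weierstrass}, the whole problem reduces to exhibiting a M\"obius transformation $\varphi$ and an isometry $\bff$ for which $\bff\circ\bfc=\overline{\bfc}\circ\varphi$ holds identically.

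First I would take $\varphi=\mathrm{id}$ and $\bff$ the reflection \eqref{the-isom} in the plane $y=0$, so that $\bff\circ\bfc=\bfM\bfc=(x,-y,z)$. The crux is then to compare this with $\overline{\bfc}$, i.e.\ to establish the conjugation identity \eqref{parastar}. To do this I would examine the three components of \eqref{weierstrass} separately, using that $f$ and $g$ are rational with \emph{real} coefficients. The integrand of the first component, $f\,(1-g^2)/2$, then has real coefficients, so conjugating the coefficients of its antiderivative leaves it unchanged, giving $\overline{x}=x$; the same reasoning applies to the third component $fg$, giving $\overline{z}=z$. The second component carries an explicit factor $i$ in front of a real-coefficient integrand, and since conjugation sends $i\mapsto -i$ while fixing the integrand, we obtain $\overline{y}=-y$. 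Hence $\overline{\bfc}=(x,-y,z)$, which is exactly $\bfM\bfc$.

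Putting the two steps together gives $\bff\circ\bfc=\bfM\bfc=(x,-y,z)=\overline{\bfc}=\overline{\bfc}\circ\varphi$, so the defining relation of case~2 in Proposition~\ref{syms-minimal} holds. That proposition then yields at once that $\bff$ is a symmetry of $\SSS$, and since $\bff$ is the reflection in the plane $y=0$, the surface is symmetric with respect to that plane, as claimed.

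I expect the main obstacle to be the justification of the conjugation identity \eqref{parastar} rather than the final assembly, which is immediate. The subtle point is what ``conjugating the coefficients'' means for the antiderivatives appearing in \eqref{weierstrass}: one must check that taking a primitive of a real-coefficient integrand (resp.\ of $i$ times a real-coefficient integrand) preserves the property of having real (resp.\ purely imaginary) coefficients. In the rational/meromorphic setting this amounts to observing that integration commutes with conjugation of coefficients and that the base point $t_0$ contributes only a constant of the same type; once this is granted, the componentwise sign bookkeeping is routine.
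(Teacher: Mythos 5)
Your proposal is correct and follows essentially the same route as the paper: the paper likewise derives the conjugation identity \eqref{parastar} componentwise from the real coefficients of $f,g$ in \eqref{weierstrass}, pairs the reflection \eqref{the-isom} with the trivial M\"obius transformation $\varphi(z)=z$ to get $\bff\circ\bfc=\overline{\bfc}\circ\varphi$, and concludes via case~2 of Proposition~\ref{syms-minimal}. Your extra remark on why integration commutes with coefficient conjugation just makes explicit what the paper compresses into ``conjugating \eqref{weierstrass} shows''.
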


\subsection{Symmetries of higher-order Enneper surfaces}\label{sec:Enneper}
As an application of the results in the previous subsection, in this subsection we illustrate how Proposition \ref{syms-minimal} can be used to compute the symmetries of the \emph{(higher-order) Enneper surfaces} $\SSS_k$, for $k=1,2,\ldots$ (c.f. \cite{Karcher}). These are the minimal surfaces obtained by taking constant $f=2$ and monomial $g=z^k$ in \eqref{weierstrass}. The Enneper surfaces are classical examples of minimal surfaces with polynomial parametrizations.

We require an explicit parametrization of $\SSS_k$, which we derive due to lack of a suitable reference. The proof involves the \emph{Chebyshev polynomial} $T_n$ of the first kind, defined recursively by
\[ T_0(x) := 1,\qquad T_1(x) := x, \qquad T_n(x) := 2x T_{n-1}(x) - T_{n-2}(x),\qquad n\geq 2,\]
or implicitly by
\begin{equation}\label{eq:Timplicit1}
T_n\big(\cos(\theta)\big) = \cos(n\theta),\qquad n\geq 0.
\end{equation} 
Substituting $\theta = \frac{\pi}{2} - \theta'$ in \eqref{eq:Timplicit1}, one obtains
\begin{multline*}
T_n\big(\sin(\theta')\big) = T_n\big(\cos(\theta)\big) = \cos(n\theta)
= \cos\left(\frac{n\pi}{2} - n\theta' \right)\\ = \cos\left(\frac{n\pi}{2}\right)\cos(n\theta') + \sin\left(\frac{n\pi}{2}\right) \sin(n\theta'),
\end{multline*}
yielding the lesser-known identity
\begin{equation}\label{eq:Timplicit2}
T_n\big(\sin(\theta)\big) = (-1)^k \sin(n\theta),\qquad n = 2k+1 \geq 1.
\end{equation}

\begin{proposition}
For $k\geq 1$, the higher-order Enneper surface $\SSS_k$ admits the parametrization
\begin{equation}\label{eq:HigherEnneper}
\bfP_k(u,v)=\left( u - \frac{r^{2k+1} T_{2k+1}\left(\frac{u}{r}\right) }{2k+1}, -v - (-1)^k \frac{r^{2k+1}T_{2k+1}\left(\frac{v}{r}\right)}{2k+1}, 2 \frac{r^{k+1} T_{k+1} \left(\frac{u}{r}\right)}{k+1} \right).
\end{equation}
\end{proposition}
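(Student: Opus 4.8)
The plan is to compute the minimal curve $\bfc$ explicitly from the Weierstrass formula \eqref{weierstrass} with $f=2$ and $g(\gamma)=\gamma^k$, and then to evaluate the real part \eqref{re1} componentwise using polar coordinates together with the Chebyshev identities \eqref{eq:Timplicit1} and \eqref{eq:Timplicit2}. First I would substitute $f = 2$, $g(\gamma)=\gamma^k$ into \eqref{weierstrass} and choose the base point $t_0 = 0$ (changing $t_0$ merely translates $\SSS_k$, so this is harmless); the resulting integrals are elementary and give
\[ \bfc(t) = \left(t - \frac{t^{2k+1}}{2k+1},\ i\Big(t + \frac{t^{2k+1}}{2k+1}\Big),\ \frac{2t^{k+1}}{k+1}\right). \]
Next, writing $t = u + iv = re^{i\phi}$ with $r = \sqrt{u^2 + v^2}$, $u = r\cos\phi$ and $v = r\sin\phi$, De Moivre's formula gives $\Re(t^n) = r^n\cos(n\phi)$ and $\Im(t^n) = r^n\sin(n\phi)$. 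Taking real parts as in \eqref{re1}, the three components of $\bfP_k(u,v) = \Re\big(\bfc(u+iv)\big)$ reduce to expressions involving $\cos(n\phi)$ and $\sin(n\phi)$ for the indices $n = 2k+1$ and $n = k+1$.

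It then remains to rewrite each trigonometric term as a Chebyshev polynomial. For the terms carrying a cosine (the first and third components) this is immediate from \eqref{eq:Timplicit1}: since $u/r = \cos\phi$, one has $\cos(n\phi) = T_n(u/r)$, giving $\Re(t^{2k+1}) = r^{2k+1}T_{2k+1}(u/r)$ and $\Re(t^{k+1}) = r^{k+1}T_{k+1}(u/r)$. The delicate term is $\Im(t^{2k+1}) = r^{2k+1}\sin\big((2k+1)\phi\big)$ appearing in the second component, for which \eqref{eq:Timplicit1} does not apply directly. This is precisely the step that motivates the lesser-known identity \eqref{eq:Timplicit2}: with $v/r = \sin\phi$ and odd index $2k+1$, it yields $\sin\big((2k+1)\phi\big) = (-1)^k T_{2k+1}(v/r)$, so that $\Im(t^{2k+1}) = (-1)^k r^{2k+1}T_{2k+1}(v/r)$.

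Substituting these three expressions back into the components of $\Re\big(\bfc(u+iv)\big)$ then reproduces \eqref{eq:HigherEnneper} verbatim. I expect the main obstacle to be the sign bookkeeping in the second component: taking the real part of $i\big(t + t^{2k+1}/(2k+1)\big)$ contributes a factor $-\Im(\cdot)$, which must be combined correctly with the factor $(-1)^k$ coming from \eqref{eq:Timplicit2}. Getting both signs to agree simultaneously is the only genuinely error-prone point; everything else is routine integration and an application of De Moivre's formula.
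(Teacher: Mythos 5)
Your proposal is correct and follows essentially the same route as the paper: both compute the minimal curve $\Psi_k(t)=\big(t - \tfrac{t^{2k+1}}{2k+1},\, i t + i\tfrac{t^{2k+1}}{2k+1},\, \tfrac{2t^{k+1}}{k+1}\big)$ from \eqref{weierstrass} and then take real parts in polar form, with the paper's identities $\tfrac{t^n+\overline{t}^n}{2} = r^n T_n(u/r)$ and $\tfrac{t^n-\overline{t}^n}{2i} = (-1)^k r^n T_n(v/r)$ being exactly your De Moivre computation of $\Re(t^n)$ and $\Im(t^n)$ combined with \eqref{eq:Timplicit1} and \eqref{eq:Timplicit2}. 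Your sign bookkeeping in the second component ($\Re(iw)=-\Im(w)$ together with the $(-1)^k$ from \eqref{eq:Timplicit2}) is handled correctly and matches the stated formula.
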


\begin{proof}
Write $t = re^{i\theta} = u + iv$. With $f(t)=2$ and $g(t) = t^k$, the expression \eqref{weierstrass} yields the minimal curve 
\begin{equation}\label{eq:minimalcurveEnneper}
\Psi_k(t)=\left(t - \frac{t^{2k+1}}{2k+1}, it + i\frac{t^{2k+1}}{2k+1}, 2\frac{t^{k+1}}{k+1} \right).
\end{equation}
From \eqref{eq:Timplicit1} it follows that
\[
\frac{t^n + \overline{t}^n}{2}
 = r^n \cos(n\theta) = r^n T_n \big( \cos(\theta) \big) = r^n T_n \left( \frac{u}{r}\right)\]
for $n\geq 0$, while \eqref{eq:Timplicit2} implies
\[
\frac{t^n - \overline{t}^n}{2i} = r^n \sin(n\theta) = (-1)^k r^n  T_n \big(\sin(\theta)\big) = (-1)^k r^n T_n \left(\frac{v}{r}\right)
\]
for $n=2k+1 \geq 1$. Hence the statement follows from $\bfP_k(u, v)=\Re\big(\Psi_k(u + iv)\big)$.
\end{proof}

\begin{example}
For $k=1$, we obtain the classical Enneper surface parametrized by
\begin{equation}
\bfP_1(u, v)=\left(v^2u-\frac13 u^3+u, \frac13v^3 - vu^2 - v, -v^2 + u^2\right),\qquad (u,v)\in \RR^2.
\end{equation}
All nontrivial minimal bicubic B\'ezier surfaces are affinely equivalent to pieces of this surface \cite[Theorem 2]{Cosin.Monterde02}; hence it is useful in computer-aided geometric design for the purpose of architecture, where minimal material usage is important.
\end{example}
\begin{remark}
With $r = \sqrt{u^2 + v^2}$ and $n = 2k + \varepsilon$ with $k\geq 0$ and $\varepsilon\in \{0, 1\}$, one can show by induction that
\begin{align*}
r^n T_n\left( \frac{u}{r} \right) & = \sum_{m = 0}^k (-1)^{k+m} {n\choose 2m + \varepsilon} v^{n - 2m - \varepsilon} u^{2m + \varepsilon}, \\
r^n T_n\left( \frac{v}{r} \right) & = \sum_{m = 0}^k (-1)^{k+m} {n\choose 2m + \varepsilon} u^{n - 2m - \varepsilon} v^{2m + \varepsilon}.
\end{align*}
This expresses the parametrization \eqref{eq:HigherEnneper} in the monomial basis.
\end{remark}

Let $O(3)$ be the \emph{orthogonal group} of $\RR^3$, i.e., the symmetry group of the sphere consisting of orthogonal $3\times 3$ matrices, and let
\begin{align*}
 D_{2k+2} & := \big\langle \rho, \sigma\,:\, \rho^{2k+2} = \sigma^2 = e,\ \sigma\rho\sigma = \rho^{-1}\big\rangle \\
          & \phantom{:} = \{\sigma^n \rho^m\,:\, n=0,1,\ m=0,\ldots,2k+1\}
\end{align*}
be the \emph{dihedral group of order $4k+4$} (here $e$ denotes the neutral element). Let
\begin{equation}\label{eq:RS}
\bfS := \begin{bmatrix}1&0&0\\0&-1&0\\0&0&1\end{bmatrix},\qquad
\bfR_k := 
\begin{bmatrix}
\phantom{+}\cos\big(\frac{\pi}{k+1}\big) & \sin\big(\frac{\pi}{k+1}\big) & 0\\
-\sin\big(\frac{\pi}{k+1}\big) & \cos\big(\frac{\pi}{k+1}\big) & 0\\
0 & 0 & -1
\end{bmatrix}.
\end{equation}

\begin{proposition}
The symmetry group $\{\bff_{m,n}(\bfx) := \bfM_{m,n}\bfx\}$ of the higher-order Enneper surface $\SSS_k$ is parametrized by the group monomorphism
\begin{equation}\label{eq:D4iso}
D_{2k+2} \longrightarrow O(3), \qquad
\sigma^n \rho^m \longmapsto \bfM_{m,n} := \bfS^n\bfR_k^m.
\end{equation}
Moreover, with $\bfP_k$ as in \eqref{eq:HigherEnneper} and M\"obius transformations $\varphi^m(z) := \zeta^m z$, where $\zeta = \zeta_{2k + 2} := e^{2\pi i/(2k+2)}$ is a $(2k+2)$-th root of unity,
\[ \bff_{m,n} \circ \bfP_k = \bfP_k \circ \varphi^m,\qquad n=0,1,\qquad m=0,1,\ldots,2k+1. \]
\end{proposition}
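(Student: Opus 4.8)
The plan is to reduce the computation of the symmetry group of $\SSS_k$ to that of its minimal curve $\CCC_k$ via Proposition \ref{syms-minimal}, applied to the polynomial parametrization $\Psi_k$ of \eqref{eq:minimalcurveEnneper}; note that $\Psi_k$ is proper, since the combination $x - iy = 2t$ is injective. The crucial preliminary is to rewrite $\Psi_k$ in the complex null frame $(1,\pm i,0),(0,0,1)$. A short computation gives $x - iy = 2t$, $x + iy = -2t^{2k+1}/(2k+1)$ and $z = 2t^{k+1}/(k+1)$; that is, in this frame the three coordinate functions are the single monomials $t$, $-t^{2k+1}/(2k+1)$, $2t^{k+1}/(k+1)$ of degrees $1$, $2k+1$, $k+1$. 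This monomial form is what makes every subsequent verification transparent.

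For the forward direction I would verify the two curve-level identities underlying the two cases of Proposition \ref{syms-minimal}. Using the null form, $\bfR_k$ multiplies the degree-$1$ coordinate by $\zeta$, the degree-$(2k+1)$ coordinate by $\zeta^{-1}=\zeta^{2k+1}$, and the $z$-coordinate by $\zeta^{k+1}=-1$; since the reparametrization $t\mapsto \zeta t$ multiplies the monomials $t,t^{2k+1},t^{k+1}$ by exactly the same factors, one obtains $\bfR_k\circ\Psi_k = \Psi_k\circ\varphi^1$, hence $\bfR_k^m\circ\Psi_k = \Psi_k\circ\varphi^m$ (case~1). For the reflection, \eqref{parastar} already gives $\bfS\circ\Psi_k = \overline{\Psi}_k$, so $\bfS\bfR_k^m\circ\Psi_k = \overline{\Psi}_k\circ\varphi^m$ (case~2). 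By Proposition \ref{syms-minimal} each $\bfS^n\bfR_k^m$ is therefore a symmetry of $\SSS_k$, and taking real parts of these curve identities recovers the surface-level relation $\bff_{m,n}\circ\bfP_k = \bfP_k\circ\varphi^m$ (for $n=1$ one uses $\overline{\Psi}_k=\bfS\Psi_k$ and $\Re\overline{Z}=\Re Z$).

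To identify the group, I would check that $\bfS$ and $\bfR_k$ satisfy the defining relations of $D_{2k+2}$: $\bfS^2=\bfI$, $\bfR_k^{2k+2}=\bfI$ with $\bfR_k$ of exact order $2k+2$ (the planar rotation by $\pi/(k+1)$ is primitive of that order and the $z$-flip is compatible), and $\bfS\bfR_k\bfS=\bfR_k^{-1}$ (conjugation by $\bfS$ inverts the planar rotation and fixes the $z$-flip). This yields a homomorphism $D_{2k+2}\to O(3)$; injectivity follows by exhibiting the full $4k+4=|D_{2k+2}|$ distinct images, which is immediate from the eigenstructure, since each $\bfR_k^m$ acts as a proper rotation on the $xy$-plane while each $\bfS\bfR_k^m$ acts there as a reflection, and within each family the rotation angles are distinct.

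The main work is completeness: that no further symmetries exist and that all of them are linear. Given a symmetry $\bff(\bfx)=\bfM\bfx+\bfb$ with $\bfM$ orthogonal, Proposition \ref{syms-minimal} produces a M\"obius $\varphi$ with $\bfM\Psi_k+\bfb=\Psi_k\circ\varphi$ or $\bfM\Psi_k+\bfb=\overline{\Psi}_k\circ\varphi$; since $\overline{\Psi}_k=\bfS\Psi_k$, the second case reduces to the first after replacing $(\bfM,\bfb)$ by $(\bfS\bfM,\bfS\bfb)$. Everything thus comes down to solving $\bfM\Psi_k(t)+\bfb=\Psi_k(\varphi(t))$. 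Because $\Psi_k$ is polynomial of degree $2k+1\ge 3$ with degree-$1$ coordinate $t$, a pole of $\varphi$ would create a pole on the left, forcing $\varphi(t)=\alpha t+\beta$ affine. Matching the top-degree term shows $(1,-i,0)$ is an eigenvector of $\bfM$ with eigenvalue $\alpha^{2k+1}$; with $\bfM$ real and orthogonal this diagonalizes $\bfM$ in the frame $(1,\pm i,0),(0,0,1)$ with eigenvalues $\overline{\lambda},\lambda,\mu$, where $|\lambda|=1$ and $\mu=\pm1$. Matching the three coordinates one by one is then the crux: the degree-$1$ coordinate gives $\alpha=\overline{\lambda}$ and equates $\beta$ with the corresponding component of $\bfb$; the degree-$(k+1)$ coordinate forces the intermediate powers of $(\alpha t+\beta)^{k+1}$ to vanish, hence $\beta=0$, and then $\bfb=0$ and $\mu=\alpha^{k+1}$; the degree-$(2k+1)$ coordinate gives $\lambda=\alpha^{2k+1}$. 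Combining $\alpha=\overline{\lambda}$ with $\lambda=\alpha^{2k+1}$ yields $\alpha^{2k+2}=1$, so $\alpha=\zeta^m$, and the eigenvalues $(\alpha,\overline{\alpha},\alpha^{k+1})$ coincide with those of $\bfR_k^m$, whence $\bfM=\bfR_k^m$ and $\varphi=\varphi^m$. Thus every symmetry is some $\bfS^n\bfR_k^m$ with $\bfb=0$, which closes the argument. The hard part is precisely this coordinate-by-coordinate matching, and the null-frame form of $\Psi_k$ is exactly what reduces it to elementary degree bookkeeping.
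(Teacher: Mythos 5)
Your argument is correct in substance and follows essentially the same route as the paper's proof: reduce to the minimal curve via Proposition~\ref{syms-minimal}, invoke Proposition~\ref{thm:diagram}, observe that polynomiality of $\Psi_k$ forces $\varphi$ to be affine, match coefficients in degrees $1$, $k+1$, $2k+1$ to obtain $\beta = 0$, $\bfb = {\bf 0}$ and $\alpha^{2k+2} = 1$, and pin down $\bfM$ by diagonalization. Your null frame $(x - iy,\, x + iy,\, z) = \big(2t,\, -2t^{2k+1}/(2k+1),\, 2t^{k+1}/(k+1)\big)$ is precisely the eigenbasis the paper encodes in the matrix $\bfC$ of the identity $\bfM = \bfC\bfA_m\bfC^{-1}$, so this is a cleaner packaging of the same computation rather than a different argument. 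Two points where your write-up is genuinely tighter than the paper's: the reduction of the second family (isometries $\CCC_k \to \overline{\CCC}_k$) to the first via $\overline{\Psi}_k = \bfS\Psi_k$ and the substitution $(\bfM,\bfb)\mapsto(\bfS\bfM,\bfS\bfb)$, where the paper instead repeats the coefficient analysis; and your explicit injectivity count via the rotation/reflection structure on the $xy$-plane, whereas the paper only checks the dihedral relations $\bfR_k^{2k+2} = \bfS^2 = \bfI$, $\bfS\bfR_k\bfS = \bfR_k^{-1}$, which by itself yields a homomorphism, not a monomorphism. Your observation that $x - iy = 2t$ makes properness of $\Psi_k$ evident is also a hypothesis check the paper leaves implicit.

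One step is not right as stated, although the flaw is inherited from the proposition itself: the surface-level identity for $n = 1$. From $\bfS\bfR_k^m\Psi_k = \overline{\Psi}_k\circ\varphi^m$ you cannot conclude $\bff_{m,1}\circ\bfP_k = \bfP_k\circ\varphi^m$ by ``$\Re\overline{Z} = \Re Z$'', because $\overline{\Psi}_k$ denotes \emph{coefficient} conjugation, which satisfies $\overline{\Psi}_k(w) = \overline{\Psi_k(\overline{w})}$ rather than $\overline{\Psi}_k(w) = \overline{\Psi_k(w)}$. Hence $\Re\big(\overline{\Psi}_k(\zeta^m t)\big) = \Re\big(\Psi_k(\zeta^{-m}\overline{t})\big)$, i.e.\ $\bff_{m,1}\circ\bfP_k = \bfP_k\circ c\circ\varphi^m$, where $c(u,v) := (u,-v)$ is the parameter conjugation. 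Already for $k = 1$, $m = 0$ one checks $\bfS\bfP_1(u,v) = \bfP_1(u,-v) \neq \bfP_1(u,v)$, so the displayed ``Moreover'' identity fails literally for $n = 1$ and should carry the extra conjugation $c$. This costs nothing for the symmetry-group claim, since precomposing with $c$ does not change the image of $\bfP_k$, so $\bff_{m,1}(\SSS_k) = \SSS_k$ still follows; note that the paper's own proof quietly sidesteps the issue by establishing only the curve-level identities $\bfM_{m,1}\bfc_k = \overline{\bfc}_k\circ\varphi^m$ and never taking real parts.
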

\begin{proof}
Applying Proposition \ref{syms-minimal} to compute the symmetries of $\SSS_k$, we first compute the symmetries of the complex space curve $\CCC_k$ parametrized by $\bfc_k$. Applying Proposition \ref{thm:diagram} with $\bfc=\bfd=\bfc_k$, each symmetry of $\CCC_k$ corresponds to an isometry $\bff(\bfx) = \bfM \bfx+\bfb$ and a M\"obius transformation $\varphi$ satisfying \eqref{af}. Since $\bfc_k$ is polynomial, $\varphi(t) = at + b$ is polynomial, and we obtain the polynomial system
\begin{equation}\label{eq:fundamentalPsi}
\bfM\bfc_k(t) + \bfb = \bfc_k(at + b).
\end{equation}
Writing $\bfM = [m_{ij}]_{ij}$ and $\bfb = [b_i]_i$, the last equation of this system is

\begin{multline}
(m_{31} + m_{32}i)t + \frac{2}{k+1} m_{33} t^{k+1} + \frac{1}{2k+1}(-m_{31} + i m_{32}) t^{2k+1} + b_3 \\
= \frac{2}{k+1} \sum_{j=0}^{k+1} {k+1 \choose j} a^jb^{k+1-j} t^j.
\end{multline}

Equating coefficients of (highest) order $2k + 1$ yields $m_{31} = m_{32} = 0$. Hence, since $\bfM$ is orthogonal, it follows that $m_{33} = \pm 1$. Equating coefficients of order $k + 1$ yields $a^{k+1} = m_{33} = \pm 1$, so that $a = \zeta^m$ for some $m\in \{0,1,\ldots,2k+1\}$. Moreover, equating linear coefficients yields $2ab^k = m_{31} + m_{32}i = 0$, implying $b=0$. Evaluating \eqref{eq:fundamentalPsi} at $t = 0$ yields $\bfb = \bfc_k(b) = {\bf 0}$. Differentiating \eqref{eq:fundamentalPsi} $l$ times and substituting $t=0$ yields
$\bfM \bfc_k^{(l)}(0) = a^{l} \bfc_k^{(l)}(0)$, which provides the matrix equation
\[ \bfM = \bfC \bfA_m \bfC^{-1},\]
where, since $\zeta_{2k+2}^{(2k+1)} = \zeta_{2k+2}^{-1}$ and $\zeta_{2k+2}^{k+1} = -1$,
\[
\bfC := \begin{bmatrix} \bfc_k'(0), \bfc_k^{(k+1)}(0), \bfc_k^{(2k+1)}(0)\end{bmatrix}, \qquad
\bfA_m := \begin{bmatrix} \zeta^m &0&0\\0& (-1)^m & 0\\0&0&\zeta^{-m}\end{bmatrix}.
\]
It follows that $\bfM_{m,0}\bfc_k = \bfc_k \circ \varphi^m$ for $m = 0,1,2,3$, where $\varphi^m(t) = \zeta^m t$ and 
\begin{align*}
\bfM_{m,0} & :=
\begin{bmatrix} 1 & 0 & -(2k)!\\ i & 0 & i(2k)!\\ 0 & 2 & 0 \end{bmatrix}
\begin{bmatrix} \zeta^m &0&0\\0& (-1)^m & 0\\0&0&\zeta^{-m}\end{bmatrix}
\frac12 \begin{bmatrix} 1 & -i & 0\\ 0 & 0 & 1\\ \frac{-1}{(2k)!} & \frac{-i}{(2k)!} & 0 \end{bmatrix}\\
 & \phantom{:} =
\begin{bmatrix}
\frac{\zeta^m + \zeta^{-m}}{2} & \frac{\zeta^m - \zeta^{-m}}{2i} &0\\
\frac{\zeta^{-m}-\zeta^m}{2i} & \frac{\zeta^m + \zeta^{-m}}{2} & 0\\
0 & 0 & (-1)^m \\
 \end{bmatrix}
 = 
\begin{bmatrix}
\phantom{+}\cos(\frac{\pi m}{k+1}) & \sin(\frac{\pi m}{k+1}) & 0\\
-\sin(\frac{\pi m}{k+1}) & \cos(\frac{\pi m}{k+1}) & 0\\
0 & 0 & (-1)^m
\end{bmatrix} \\  
& \phantom{:} = \bfR_k^m.
\end{align*}

Next we compute the isometries mapping $\CCC_k$ onto the complex curve $\overline{\CCC}_k$ parametrized by $\overline{\bfc}_k$. Applying Proposition \ref{thm:diagram} with $\bfc=\overline{\bfd}=\bfc_k$, each such isometry $\bff(\bfx) = \bfM \bfx+\bfb$ corresponds to a M\"obius transformation $\varphi$ satisfying \eqref{af}, again necessarily polynomial:
\begin{equation}\label{eq:fundamentalPsiOverline}
\bfM\bfc_k(t) + \bfb = \overline{\bfc}_k(at + b).
\end{equation}
Proceeding as before, one demonstrates that $\varphi(t) = \varphi^m(t) := \zeta^m t$ and $\bfb = {\bf 0}$. It follows that $\bfM_{m,1}\bfc_k = \overline{\bfc}_k\circ \varphi^m$ for $m = 0,1,\ldots,2k+1$, where 
\begin{multline}
\bfM_{m,1} = \overline{\bfC} \bfA_m \bfC^{-1} = 
\bfS \bfC \bfA_m \bfC^{-1} \\
=
\begin{bmatrix} 1&0&0\\0&-1&0\\0&0&1 \end{bmatrix}
\begin{bmatrix}
\phantom{+}\cos(\frac{\pi m}{k+1}) & \sin(\frac{\pi m}{k+1}) & 0\\
-\sin(\frac{\pi m}{k+1}) & \cos(\frac{\pi m}{k+1}) & 0\\
0 & 0 & (-1)^m
\end{bmatrix}
= \bfS \bfR_k^m.
\end{multline}

One verifies that the map $\sigma^n \rho^m \longmapsto \bfM_{m,n}$ is a monomorphism by comparing multiplication tables, or simply by verifying that its generators $\bfR_k$ and $\bfS$ satisfy $\bfR_k^{2k+2} = \bfS^2 = \bfI$ and $\bfS \bfR_k \bfS = \bfR_k^{-1}$.
\end{proof} 

\begin{remark}
Since $\bfS \bfR_k^m = \bfR_k^{-m} \bfS$, precomposing \eqref{eq:D4iso} with the group automorphism $\sigma^n \rho^m \longmapsto \sigma^n \rho^{-m}$ of $D_{2k+2}$ yields an alternative group monomorphism
\begin{equation*}
D_{2k+2} \longrightarrow O(3), \qquad
\sigma^n \rho^m \longmapsto \bfM_{-m,n} := \bfR_k^m \bfS^n.
\end{equation*}
\end{remark}

Note that $\bfM_{m,n}$ is a rotation of angle $-\frac{\pi m}{k+1}$ about the $z$-axis, composed by a reflection in the plane $z = 0$ when $m\equiv 1$ (mod 2), and in addition composed by a reflection in the plane $y = 0$ in the case $n=1$. For $k=1,2,3,4$, Figure \ref{fig:HigherEnneper} shows the higher-order Enneper surface $\SSS_k$, together with its symmetry elements.

The particular form $\varphi^m(t) := \zeta^m t$ of the M\"obius transformation was the inspiration for the results in the next subsection. 

\newcommand{\enp}[3]{\includegraphics[scale=0.39, clip=true, trim=30 45 30 40]{Enneper-k-#1-elements-elevation-#2-azimuth-#3-N-200.png}}
\begin{figure}
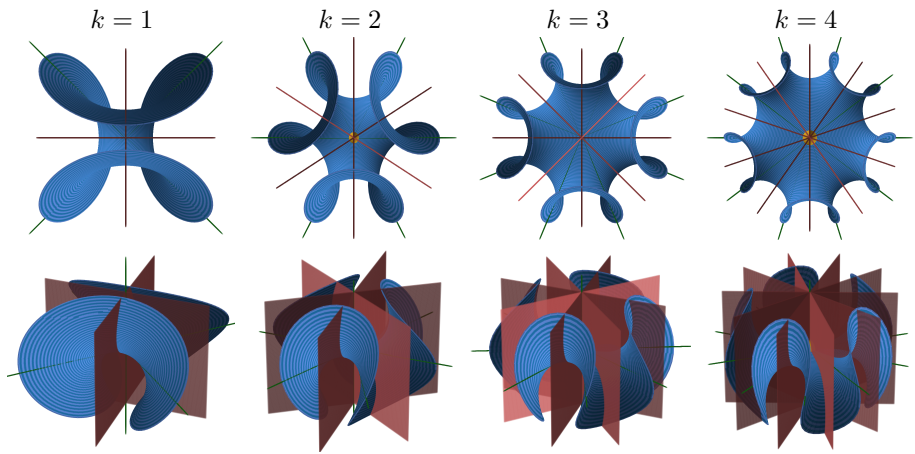

\bgroup
\setlength{\tabcolsep}{0.0em}
\begin{tabular}{cccc}
$k=1$ & $k=2$ & $k=3$ & $k=4$\\
\enp{1}{90}{0}  & \enp{2}{90}{0}  & \enp{3}{90}{0} & \enp{4}{90}{0}\\
\enp{1}{30}{20} & \enp{2}{30}{20} & \enp{3}{30}{20} & \enp{4}{30}{20}
\end{tabular}
\egroup

\caption{For $k=1,2,3,4$, top view (top) and side view (bottom) of higher-order Enneper surfaces $\SSS_k$, together with symmetry planes, symmetry rotation axes, and symmetry point (the latter for $k=2,4$).}\label{fig:HigherEnneper}
\end{figure}

\subsection{Constructing symmetric surfaces}\label{subsec-prescribed}

Inspired by Section \ref{sec:Enneper}, in this subsection we will see that imposing certain parity-like properties on the functions $f,g$ in \eqref{weierstrass} will result in a minimal surface $\SSS$ with certain symmetries. In this case, we need to make certain assumptions on $f,g$ (see Proposition \ref{prop:SchroderSymmetries}). When satisfied, these assumptions lead to meromorphic, not necessarily rational, parametrizations of $\SSS$.

Let $\FFF$ be the space of meromorphic functions on a simply connected region $U\subset\CC$ and let $\varphi: U\longrightarrow U$ be meromorphic. Consider the composition operator
\begin{equation}\label{eq:composition-operator}
T_\varphi: \FFF\longrightarrow\FFF, \qquad T_\varphi (f) := f\circ \varphi.
\end{equation}
The eigenvalue equation
\[ T_\varphi(f) = \lambda f \]
is called \emph{Schr\"oder's equation}; it is known to have solutions under general conditions.

In our case, for any integer $K\geq 2$, we consider the M\"obius transformation $\varphi(t) = \varphi_K(t) := \zeta_K \cdot t$, where $\zeta_K$ denotes a $K$-root of the unity. Notice that $\varphi$ leaves invariant any complex disk centered at the origin; in particular, in this case we can take $U$ to be any such disk, or even the entire complex plane $\CC$. The corresponding composition operator $T = T_\varphi$ generalizes the parity operator. The $K$-fold composition satisfies $T^K (f) = f\circ \varphi^K = f$, which implies that the eigenvalues of $T$ are the $K$-th roots of unity $\zeta_K^m$, with $m = 0,\ldots, K-1$. These provide an eigendecomposition of the function space $\FFF = \FFF_0 \oplus \cdots \oplus \FFF_{K-1}$ into $K$ parts. 

For simplicity we restrict ourselves to $K=4$, in which case $\zeta_K = i$; a similar analysis can be carried out for any $K\geq 2$. The following proposition states that choosing $f,g$ in \eqref{weierstrass} as eigenfunctions of $T$ (and hence of $T^q$, with $q\geq 1$) results in certain symmetries of the corresponding minimal surface. More precisely, we obtain a symmetry for every pair of eigenpairs $(i^r, f), (i^s, g)$ of $T^q$ for which $q + r + s\equiv 0$ (mod 2). 
The functions $f, g$ and domain $U$ satisfy the requirements for the Weierstrass representation \eqref{weierstrass}, described in more detail in Section \ref{sec:MinimalSurfaces1}.

\begin{proposition}\label{prop:SchroderSymmetries}
Let $\varphi(t) = i t$ and $T = T_\varphi$ be as above. Suppose that in a simply-connected region $U\subset \CC$ invariant under $\varphi$, $f$ is holomorphic, $g$ is meromorphic with no pole at $t=0$, and $fg^2$ is holomorphic. 
In addition, suppose $f,g$ satisfy
\begin{equation}\label{eq:parityconditions}
T^q(f)(t) = f(i^q t)=i^r \cdot f(t),\qquad T^q(g)(t) = g(i^q t)=i^s\cdot g(t)
\end{equation}
for some $q,r,s \in \ZZ/4\ZZ$ satisfying $q + r + s\equiv 0$ (mod 2). Let $\CCC$ be the corresponding curve parametrized by $\bfc$ as in \eqref{weierstrass}, with complex conjugate $\overline{\CCC}$ parametrized by $\overline{\bfc}$. Let $\SSS$ be the corresponding minimal surface parametrized by $\bfP$ as in \eqref{eq:re2}. Then $\SSS$ has the symmetry $\bff^\pm_{q+r,s}(\bfx) = \bfM^\pm_{q+r,s} \bfx$ as in Table \ref{tab:isometries}, for each choice of the sign $\pm$.
\end{proposition}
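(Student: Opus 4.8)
The plan is to apply Proposition~\ref{syms-minimal}, which reduces the problem of finding symmetries of $\SSS$ to finding either a symmetry of the minimal curve $\CCC$ (Case 1, $\bff\circ\bfc = \bfc\circ\varphi$) or an isometry carrying $\CCC$ onto $\overline{\CCC}$ (Case 2, $\bff\circ\bfc = \overline{\bfc}\circ\varphi$), for a suitable M\"obius transformation $\varphi$. The natural candidate is $\varphi = \varphi^q$, i.e. $\varphi(t) = i^q t$, so that the parity conditions \eqref{eq:parityconditions} on $f$ and $g$ can be leveraged directly. First I would compute how the components of $\bfc$ transform under $t\mapsto i^q t$. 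Substituting $\gamma \mapsto i^q\gamma$ in the integrals \eqref{weierstrass} and using \eqref{eq:parityconditions}, the factor $f(i^q\gamma)\,\rmd(i^q\gamma) = i^{q+r} f(\gamma)\,\rmd\gamma$ appears, while the $g$-dependent factors $1\mp g^2$ pick up the eigenvalue $i^{2s}$ on the $g^2$-term. Thus each coordinate of $\bfc(i^q t)$ is an explicit scalar multiple (a power of $i$) of a coordinate of $\bfc(t)$, with the precise powers controlled by the combinations $q+r$ and $s$.

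The key computation is therefore to read off the diagonal/off-diagonal structure of the resulting linear relation $\bfc(i^q t) = \bfN\,\bfc(t)$ and then check when $\bfN$ (or its conjugate-twisted version) equals a \emph{real orthogonal} matrix, matching one of the entries $\bfM^\pm_{q+r,s}$ in Table~\ref{tab:isometries}. Concretely, I would treat the first two coordinates of $\bfc$ together, since they share the factor $f(1\mp g^2)/2$ and mix under conjugation exactly as in the real/imaginary decomposition seen in the Enneper computation; the eigenvalue $i^{2s}=(-1)^s$ on the $g^2$ piece together with the overall $i^{q+r}$ determines whether this $2\times 2$ block is a rotation or a reflection, while the third coordinate (carrying $fg$, hence eigenvalue $i^{q+r+s}$) is scaled by a single power of $i$. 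The parity hypothesis $q+r+s\equiv 0\pmod 2$ is exactly what guarantees this third scaling is $\pm 1$ and that the $2\times 2$ block has real entries, so that the whole matrix $\bfM^\pm_{q+r,s}$ is real and orthogonal rather than genuinely complex. For Case~2 one additionally composes with the complex conjugation relation \eqref{parastar}-style passage from $\bfc$ to $\overline{\bfc}$, which introduces the reflection $\bfS$ in the plane $y=0$ as an extra factor, accounting for the two sign choices $\pm$ and the two families in the table.

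I expect the main obstacle to be bookkeeping rather than conceptual: carefully tracking the four independent residues $q,r,s$ modulo $4$ through the substitution, and verifying that the parity constraint $q+r+s\equiv 0\pmod 2$ precisely carves out the cases in which the induced linear map is real orthogonal (and hence a genuine isometry, as required for a symmetry). A subtle point deserving care is the treatment of the additive constant arising from the lower integration limit $t_0$: since the Weierstrass integrals are taken from a fixed base point, the substitution $\gamma\mapsto i^q\gamma$ changes the lower limit, producing a translation vector; I would argue, as in the Enneper proof via evaluation at $t=0$, that the no-pole assumption on $g$ at the origin lets us normalize $t_0 = 0$ and $\bfc(0)=\mathbf{0}$, so that this constant vanishes and the resulting affine map is genuinely linear, $\bff^\pm_{q+r,s}(\bfx)=\bfM^\pm_{q+r,s}\bfx$ with $\bfb=\mathbf{0}$, matching the statement. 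Finally, I would confirm that the constructed $\varphi = \varphi^q$ is indeed a M\"obius transformation and that the curve $\CCC$ is proper and $\SSS$ non-multitranslational, so that Proposition~\ref{syms-minimal} (extended to the meromorphic case by Remark~\ref{rem-ext-1}) legitimately applies.
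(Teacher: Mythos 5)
Your proposal is correct and follows essentially the same route as the paper's proof: reduce via Proposition~\ref{syms-minimal} (with Remark~\ref{rem-ext-1}) to the curve level with $\varphi^q(t)=i^qt$, substitute $\gamma = i^q\eta$ in the Weierstrass integrals to pick up the factors $i^{q+r}$, $(-1)^s$ and $i^{q+r+s}$, use $\bfc = \bfS\overline{\bfc}$ to produce the two sign families, and let the parity constraint $q+r+s\equiv 0\pmod 2$ select the real orthogonal matrices of Table~\ref{tab:isometries}. Your explicit handling of the base point $t_0=0$ (justified by the no-pole hypothesis on $g$ at the origin, so the affine part vanishes) is in fact slightly more careful than the paper, which takes the lower limit $0$ tacitly.
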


\begin{proof}
Suppose \eqref{eq:parityconditions} holds for some $q,r,s \in \ZZ/4\ZZ$ satisfying $q + r + s\equiv 0$ (mod~2). By Proposition \ref{syms-minimal} and Remark \ref{rem-ext-1}, the symmetries of $\SSS$ are the symmetries of $\CCC$ and the isometries mapping $\CCC$ onto $\overline{\CCC}$. In light of Proposition~\ref{thm:diagram} and Remark \ref{extension-meromorphic}, we examine when the reparametrization $\varphi^q(t) := i^q t$ of $\bfc$ can be expressed as the composition of such an isometry with either $\bfc$ or $\overline{\bfc}$. Applying the change of variable $\gamma=i^q \cdot \eta$ and using \eqref{eq:parityconditions},
\begin{align}
\Psi_1(i^q t) &  = \int_0^{i^q\cdot t} f(\gamma) \frac{1-g^2(\gamma)}{2} \rmd\gamma \label{psi1}
              && = i^{q+r} \int_0^t f(\eta) \frac{1-(-1)^s g^2(\eta)}{2} \rmd\eta, \\
\Psi_2(i^q t) &  = i\cdot\int_0^{i^q\cdot t} f(\gamma)\frac{1+g^2(\gamma)}{2}\rmd\gamma \label{psi2}
              \!\!\!\!\!\!\!&& = i^{q+r+1} \int_0^t f(\eta) \frac{1+(-1)^{s}g^2(\eta)}{2} \rmd\eta,\\
\Psi_3(i^q t) &  = \int_0^{i^q\cdot t} f(\gamma)g(\gamma) \rmd\gamma \label{psi3}
              && = i^{q+r+s} \int_{0}^{t} f(\eta)g(\eta) \rmd\eta.
\end{align}
With $t:=q+r$ and using that $\bfc = \bfS\overline{\bfc}$, it follows that
\begin{alignat*}{3}
\bfc \circ \varphi^q & = 
\begin{bmatrix}
i^t&0&0\\0&i^t & 0\\0&0&i^{t+s}
\end{bmatrix}
\bfc
&& = 
\begin{bmatrix}
i^t&0&0\\0&i^{t-2} & 0\\0&0&i^{t+s}
\end{bmatrix}
\overline{\bfc}
,\quad & \text{if } s\equiv 0\text{ (mod 2)},\\
\bfc \circ \varphi^q & = 
\begin{bmatrix}
0 & i^{t-1}&0\\i^{t+1} & 0 & 0\\0&0&i^{t+s}
\end{bmatrix}
\bfc && \, =
\begin{bmatrix}
0 & i^{t+1}&0\\i^{t+1} & 0 & 0\\0&0&i^{t+s}
\end{bmatrix}
\overline{\bfc}
,\quad & \text{if } s\equiv 1\text{ (mod 2)},
\end{alignat*}
so that
\begin{align}
\bfc \circ \varphi^q (t) =&  \bff^+_{q+r,s} \circ \bfc(t) = \bfM^+_{q+r,s} \bfc(t), \label{eq:Mplus}\\
\bfc \circ \varphi^q (t) =&  \bff^-_{q+r,s} \circ \overline{\bfc}(t) = \bfM^-_{q+r,s} \overline{\bfc}(t),\label{eq:Mminus}
\end{align}
where $\bfM^+_{q+r,s} = \bfM^-_{q+r,s} \bfS$ is as in Table \ref{tab:isometries}. The real isometries $\bff^\pm_{q+r,s}$ are obtained by discarding the cases $q + r + s\equiv 1$ (mod 2), shown in gray. Thus, from Proposition~\ref{thm:diagram} we deduce that in the remaining cases $\bff^+_{q+r,s}$ is a symmetry of $\CCC$ and $\bff^-_{q+r,s}$ is an isometry mapping $\CCC$ onto $\overline{\CCC}$. In either case, Proposition~\ref{syms-minimal} implies that $\bff$ is a symmetry of the surface $\SSS$.
\end{proof}

\begin{table}
{\small
\noindent\begin{tabular*}{\columnwidth}{c@{\extracolsep{\stretch{1}}}*{4}{c}}
\toprule
$\bfM^+_{q+r,s}$ & $q + r \equiv 0$ & $q + r \equiv 1$ & $q + r \equiv 2$ & $q + r \equiv 3$\\ \bottomrule
             & identity & & central inversion & \\ 
             & $\RR^3$ & & $x=y=z=0$ & \\
$s \equiv 0$ & $\begin{bmatrix} 1&0&0\\0&1&0\\0&0&1 \end{bmatrix}$ 
             & \color{Gray} $\begin{bmatrix} i&0&0\\0& i&0\\0&0&i \end{bmatrix}$ 
             & $\begin{bmatrix} -1&0&0\\0&-1&0\\0&0&-1 \end{bmatrix}$ 
             & \color{Gray} $\begin{bmatrix} -i&0&0\\0&- i&0\\0&0&-i \end{bmatrix}$ \\ \midrule
             & & rotoreflection & & quarter-turn \\
             & & $x=y=z=0$ & & $x=y=0$ \\
$s \equiv 1$ & \color{Gray} $\begin{bmatrix} 0&-i&0\\ i&0&0\\0&0&i \end{bmatrix}$ 
             & $\begin{bmatrix} 0&1&0\\-1&0&0\\0&0&-1 \end{bmatrix}$ 
             & \color{Gray} $\begin{bmatrix} 0&i&0\\-i&0&0\\0&0&-i \end{bmatrix}$ 
             & $\begin{bmatrix} 0&-1&0\\ 1&0&0\\0&0&1 \end{bmatrix}$ \\ \midrule
             & reflection & & half-turn & \\
             & $z = 0$ & & $x=y=0$ & \\
$s \equiv 2$ & $\begin{bmatrix} 1&0&0\\0& 1&0\\0&0&-1 \end{bmatrix}$ 
             & \color{Gray} $\begin{bmatrix} i&0&0\\0& i&0\\0&0&-i \end{bmatrix}$ 
             & $\begin{bmatrix} -1&0&0\\0&-1&0\\0&0&1 \end{bmatrix}$ 
             & \color{Gray} $\begin{bmatrix} -i&0&0\\0&-i&0\\0&0&i \end{bmatrix}$ \\ \midrule
             & & quarter-turn & & rotoreflection \\ 
             & & $x=y=0$      & & $x=y=z=0$      \\
$s \equiv 3$ & \color{Gray} $\begin{bmatrix} 0&-i&0\\ i&0&0\\0&0&-i \end{bmatrix}$ 
             & $\begin{bmatrix} 0&1&0\\ -1&0&0\\0&0&1 \end{bmatrix}$ 
             & \color{Gray} $\begin{bmatrix} 0&i&0\\ -i&0&0\\0&0&i \end{bmatrix}$ 
             & $\begin{bmatrix} 0&-1&0\\ 1&0&0\\0&0&-1 \end{bmatrix}$\smallskip \\ \toprule
$\bfM^-_{q+r,s}$ & $q + r \equiv 0$ & $q + r \equiv 1$ & $q + r \equiv 2$ & $q + r \equiv 3$\\ \bottomrule
             & reflection & & half-turn & \\
             & $y=0$      & & $x=z=0$   & \\
$s \equiv 0$ & $\begin{bmatrix} 1&0&0\\0& -1&0\\0&0&1 \end{bmatrix}$ 
             & \color{Gray} $\begin{bmatrix} i&0&0\\0& -i&0\\0&0&i \end{bmatrix}$ 
             & $\begin{bmatrix} -1&0&0\\0& 1&0\\0&0&-1 \end{bmatrix}$ 
             & \color{Gray} $\begin{bmatrix} -i&0&0\\0& i&0\\0&0&-i \end{bmatrix}$ \\ \midrule
             & & half-turn & & reflection \\
             & & $x+y = z = 0$ & & $x-y=0$ \\
$s \equiv 1$ & \color{Gray} $\begin{bmatrix} 0&i&0\\ i&0&0\\0&0&i \end{bmatrix}$ 
             & $\begin{bmatrix} 0&-1&0\\ -1&0&0\\0&0&-1 \end{bmatrix}$ 
             & \color{Gray} $\begin{bmatrix} 0&-i&0\\ -i&0&0\\0&0&-i \end{bmatrix}$ 
             & $\begin{bmatrix} 0&1&0\\ 1&0&0\\0&0&1 \end{bmatrix}$ \\ \midrule 
             & half-turn & & reflection & \\
             & $y=z=0$ & & $x=0$ & \\
$s \equiv 2$ & $\begin{bmatrix} 1&0&0\\0& -1&0\\0&0&-1 \end{bmatrix}$ 
             & \color{Gray} $\begin{bmatrix} i&0&0\\0& -i&0\\0&0&-i \end{bmatrix}$ 
             & $\begin{bmatrix} -1&0&0\\0& 1&0\\0&0&1 \end{bmatrix}$ 
             & \color{Gray} $\begin{bmatrix} -i&0&0\\0& i&0\\0&0&i \end{bmatrix}$ \\ \midrule
             & & reflection & & half-turn\\
             & & $x + y = 0$ & & $ x - y = z = 0$\\
$s \equiv 3$ & \color{Gray} $\begin{bmatrix} 0&i&0\\ i&0&0\\0&0&-i \end{bmatrix}$ 
             & $\begin{bmatrix} 0&-1&0\\ -1&0&0\\0&0&1 \end{bmatrix}$ 
             & \color{Gray} $\begin{bmatrix} 0&-i&0\\ -i&0&0\\0&0&i \end{bmatrix}$ 
             & $\begin{bmatrix} 0&1&0\\ 1&0&0\\0&0&-1 \end{bmatrix}$ \\
\bottomrule
\end{tabular*}
}
\caption{Real orthogonal (black) and imaginary unitary (gray) matrices $\bfM^\pm_{q+r,s}$ in \eqref{eq:Mplus}--\eqref{eq:Mminus} with symmetry types and symmetry elements for the various cases $(q,r,s)$, where $\equiv$ denotes equivalence modulo 4.}\label{tab:isometries}
\end{table}

The Enneper surface $\SSS_1$ originates from taking $f(t)=2$ and $g(t)=t$, in which case
\[ f(i^q t)=2,\qquad g(i^q t)=i^q t, \qquad q\in \ZZ/4\ZZ.\]
Hence \eqref{eq:parityconditions} holds whenever $r\equiv 0$ modulo 4 and $s\equiv q\equiv q + r$ modulo 4. Therefore the symmetries $\bfR_1^m \bfS^n$ of the Enneper surface are recovered as the diagonal cases $m \equiv s\equiv q+r$, with $n=0$ for the top sign and $n=1$ for the bottom sign.

\begin{remark}
Consider the (external) direct product group
\[ D_4 \times \ZZ/2\ZZ \simeq \langle \rho,\sigma,\tau\,:\,\rho^4=\sigma^2=\tau^2=e,\ \sigma\rho\sigma=\rho^{-1},\ \tau\rho=\rho\tau,\ \sigma\tau=\tau\sigma \rangle, \]
where $e$ denotes the neutral element. With $\bfR_1, \bfS$ as in \eqref{eq:RS} and with $\bfT := \diag(1,1,-1)$ the reflection in the plane $z=0$, the map
\[
D_4 \times \ZZ/2\ZZ \longrightarrow O(3), \qquad \rho^m \sigma^n \tau^p\longmapsto \bfR_1^m \bfS^n \bfT^p
\]
is a group monomorphism establishing a group structure on the set of real matrices in Table \ref{tab:isometries}.
\end{remark}

\begin{remark}
Alternatively, consider M\"obius transformations $\varphi(t) = t + b$ and $T=T_\varphi$ as in \eqref{eq:composition-operator} for a space $\FFF$ of periodic or doubly periodic meromorphic functions. For $K\geq 2$, choosing $b = \omega / K$ for one of the periods $\omega$ of $\FFF$, the operator $T$ again has order $K$ and eigenvalues $\zeta_K^m$, for $m=0,\ldots, K-1$. Analogous to Proposition \ref{prop:SchroderSymmetries}, solutions $f,g$ to Schr\"oder's equation again lead to symmetric minimal surfaces (c.f. \cite{Karcher}).
\end{remark}

\section{Conclusion and open problems}

In this paper we have introduced a characterization for affine equivalence of two surface of translation, defined by either rational or meromorphic generators. Since minimal surfaces are surfaces of translation with a complex conjugate generator pair, the results naturally translate to minimal surfaces of the considered kind as well. When the generators are rational, our algorithm leads to an algorithm that ultimately relies on the algorithm in \cite{HJ18} to check whether two space curves are affinely equivalent.  Additionally, we have applied our results to building surfaces of translation and minimal surfaces with symmetries, and to computing the symmetries of higher-order Enneper surfaces.

However, notice that the algorithms in this paper require the surfaces to be defined by means of certain types of parametrization. In the case of surfaces of translation, we need them to be given in the standard form $\bfP(u, v)=\frac12\big(\bfc_1(u)+\bfc_2(v)\big)$, where $\bfc_1(u)$ and $\bfc_2(v)$ are rational curves. In the case of minimal surfaces, we require them to be given as in \eqref{re1}, which in turn requires to know a minimal curve for the surface. 

If a surface of translation is reparametrized, then the standard form is lost. In the general case, it is still an open problem to efficiently recognise a surface as a surface of translation when it is not parametrized in the standard way (c.f. \cite[\S 2.3]{VL17}), and to bring it into standard form.
Similarly, if a rational minimal surface undergoes a rational reparametrization, computing a minimal curve for the surface is still an open problem.

A first step in this direction is the paper \cite{PDS14}, where ideas for recognizing rational surfaces of translation are provided. However, the algorithm in \cite{PDS14} requires computing the implicit equation of the surface and, which is harder, a tangent direction to one of the curves in the generator pairs. Although \cite{PDS14} presents a novel approach in this regard, a complete and efficient solution to the problem is still absent. However, since minimal surfaces are surfaces of translation with a complex conjugate generator pair, these two open problems, i.e., recognising and reparametrizing translational surfaces and computing a minimal curve, are certainly connected.

%\section*{References} 

\end{document}